\theoremstyle{plain}
\newtheorem{theorem}{Theorem}[section]
\newtheorem{lemma}[theorem]{Lemma}
\newtheorem{corollary}[theorem]{Corollary}
\newtheorem{proposition}[theorem]{Proposition}
\theoremstyle{definition}
\newtheorem{definition}[theorem]{Definition}
\newtheorem{assumption}[theorem]{Assumption}
\theoremstyle{remark}
\newtheorem{remark}[theorem]{Remark}
\newcommand{\dv}{\operatorname{div}}
\newcommand{\dist}{\operatorname{dist}}
\numberwithin{equation}{section}
\newcommand{\bR}{\mathbb{R}}
\newcommand{\bS}{\mathbb{S}}
\newcommand{\vq}{\vec{q}}
\newcommand\cA{\mathcal{A}}
\providecommand{\set}[1]{\{#1\}}
\providecommand{\abs}[1]{\lvert#1\rvert}
\providecommand{\Abs}[1]{\left\lvert#1\right\rvert}
\providecommand{\norm}[1]{\lVert#1\rVert}
\providecommand{\Norm}[1]{\left\lVert#1\right\rVert}
\renewcommand{\vec}[1]{\boldsymbol{#1}}
\def\XXint#1#2#3{{\setbox0=\hbox{$#1{#2#3}{\int}$}
		\vcenter{\hbox{$#2#3$}}\kern-.5\wd0}}
\newcommand{\p}{\partial}
\newcommand{\epsi}{\varepsilon}
\begin{document}

%
%
\subjclass[2010]{Primary 35J25, 35B65; Secondary 35J15}
\keywords{Oblique derivative problem, classical solutions, $L_1$-mean Dini condition, $C^{1,\text{Dini}}$ domain}

	\title[Oblique Derivative Problem]{$C^2$ estimate for oblique derivative problem\\ with mean Dini coefficients}
	
	\author[Hongjie Dong]{Hongjie Dong}	
	
	\address{Brown University\\ Division of Applied Mathematics \\
		Providence RI 02906}
	
	\email{hongjie$\_$dong@brown.edu}
\thanks{H. Dong and Z. Li were partially supported by the NSF under agreement DMS-1600593.}
	
	\author[Zongyuan Li]{Zongyuan Li}
	
	\address{Brown University\\ Division of Applied Mathematics \\
		Providence RI 02906}
	
	\email{zongyuan$\_$li@brown.edu}
	
\begin{abstract}
We consider second-order elliptic equations in non-divergence form with oblique derivative boundary conditions. We show that any strong solutions to such problems are twice continuously differentiable up to the boundary provided that the mean oscillations of coefficients satisfy the Dini condition and the boundary is locally represented by a $C^1$ function whose first derivatives are Dini continuous. This improves a recent result in \cite{DLK}. An extension to fully nonlinear elliptic equations is also presented.
\end{abstract}
\maketitle


\section{Introduction and Main Results}
In this paper, we consider strong solutions $u\in W^2_2(\Omega)$ to the oblique derivative problem in a bounded domain $\Omega\subset \bR^d,d\ge 2$:
\begin{equation}\label{eqn-main-with-lower-order}
\begin{cases}
Lu := -a_{ij}D_{ij}u + b_i D_i u + cu = f &\text{in}\,\Omega,\\
Bu := \beta_0 u +\beta_iD_i u = g &\text{on}\,\p\Omega.
\end{cases}
\end{equation}
For simplicity, the notation
$$\vec A:=(a_{ij})_{i,j=1}^d,\quad \vec{b}:= (b_i)_{i=1}^d,\quad \vec{\beta}:=(\beta_i)_{i=1}^d$$
are used for matrices and vectors.
We also denote $\bS^{d\times d}$ to be the set of $d\times d$ real-valued constant symmetric matrices. The following conditions regarding the elliptic operator and boundary condition will be assumed throughout this paper. All the coefficients in the elliptic operators are assumed to be bounded and measurable, and the leading coefficients are assumed to be symmetric and uniformly elliptic with elliptic constant $\lambda>0$:
\begin{equation*}
\lambda\abs{\zeta}^2 \leq a_{ij}\zeta_i\zeta_j \leq \lambda^{-1}\abs{\zeta}^2,\,\, \forall \zeta\in \bR^{d},\quad \abs{\vec{b}}, \abs{c} \leq K, \quad a_{ij}=a_{ji},
\end{equation*}
where $K>0$ is a constant. We also assume that the boundary operator is oblique, i.e., for some $\delta\in(0,1)$,
\begin{equation}\label{eqn-oblique}
\beta_i n_i \leq -\delta\abs{\vec{\beta}}<0\,\, \quad \text{on }\p\Omega,
\end{equation}
where $\vec{n}=(n_i)_{i=1}^d$ is the unit outward normal direction.

We are interested in better regularity of $W^{2}_2$-strong solutions to \eqref{eqn-main-with-lower-order}. As proved in \cite{DL}, if $\vec{A}$ has locally small bounded mean oscillations (small-BMO), $\beta_0,\vec{\beta}\in C^\alpha,\alpha\in (0,1)$, and $\p\Omega$ can be locally represented by a Lipschitz function with sufficiently small Lipschitz constant, then for any $q\in(1,1/(1-\alpha))$, $f\in L_q(\Omega)$ and $g\in W^{1}_q(\Omega)$ imply that the strong solution $u\in W^{2}_q(\Omega)$. In this paper, we give minimal regularity assumptions on these objects such that any $W^{2}_2(\Omega)$-strong solution is $C^2(\overline{\Omega})$. Due to an example given in \cite[Theorem~4]{EM}, $D^2u$ might not be bounded or even BMO if we merely assume the continuity of $a_{ij}$. Hence certain conditions on its modulus of continuity are needed.

For divergence form equations it is well known that any weak solution is $C^1$ if $a_{ij}$ satisfies the so-called $\alpha$-increasing Dini condition, i.e., the modulus of continuity of $a_{ij}$ is bounded by a Dini function (see definition below) $\phi$ satisfying that $\phi(r)/r^\alpha$ is non-increasing for some $\alpha\in(0,1]$. This is the borderline case of the classical Campanato-type results, see \cite[Section~5]{L87}. In this direction, later Li in \cite{LI} obtained the interior $C^1$-regularity, if
$$
\varphi_{\vec{A}}(r):=\sup_{x} \left(\fint_{B_r(x)}\abs{a_{ij}(y)-a_{ij}(x)}^2\,dy\right)^{1/2}
$$
is a Dini function. Recently, in \cite{DK17} the first named author and Kim generalized this result by assuming that
$$
\omega_{\vec{A}}(r):=\sup_{x} \fint_{B_r(x)}\Big| a_{ij}(y)-\big(\fint_{B_r(x)}a_{ij}\big)\Big|\,dy
$$
is a Dini function, noting $\omega_A(r)\leq 2\varphi_A(r)$. In the same paper, under the same type of regularity assumptions, interior $C^2$ estimate was also proved for equations in non-divergence form. The corresponding boundary estimate for Dirichlet problem in $C^{2,\text{Dini}}$ domain can be found in \cite{DEK}.

Back to the oblique derivative problem, besides the same assumptions on $a_{ij}$ as the interior case, certain regularity assumptions on $\p\Omega$ and the boundary operator are also needed in order to obtain a global result. Direct computation shows that similar to Dirichlet problem, if $\p\Omega \in C^{2,\text{Dini}}$, we can reduce the problem to the half space case by simply flattening the boundary. However, in the same spirit of \cite{Lie,Sa} and \cite{DL}, we expect $\p\Omega$ to be one derivative less regular, i.e., we expect the result still holds when $\p\Omega\in C^{1,\text{Dini}}$. In this regard, a global $C^2$ estimate was proved in \cite{DLK} given that $\p\Omega, \beta_0, \vec{\beta}\in C^{1,\text{Dini}^2}$, which means, e.g., the Dini integrals of their modulus of continuity $I\rho_{\bullet}$ (see definition below) are still Dini functions. The proof was based on an extension idea introduced in \cite{Sa}, which was also used in \cite{DL}. In this paper, using the $W^{2}_p$ estimate in \cite{DL}, the regularized distance, and a delicate decomposition of solutions, we relax the regularity assumption to $\p\Omega, \beta_0, \vec{\beta} \in C^{1,\text{Dini}}$, which seems to be optimal for the global $C^2$ estimate. To the best of our knowledge, such result is new even when the coefficients are smooth.

Now we formulate our problem precisely. A function $\theta: (0,1]\rightarrow [0,\infty)$ is called a Dini function if
$$\int_0^1 \frac{\theta(r)}{r} \, dr<\infty.$$
We write its Dini integral as
$$I\theta(r):=\int_0^r \frac{\theta(t)}{t}\,dt.$$
Both of the following notation are used for the average
$$
(f)_{\Omega} = \fint_{\Omega} f.
$$
We also denote
$$
\Omega_r(x):=\Omega\cap B_r(x),
$$
where the center $x$ will be omitted when it is the origin.
\begin{definition}\label{def-oscillation}
For a function $f$ defined on $\Omega$, we consider two types of its oscillations:
$$\omega_f(r):=\sup_{x\in \overline{\Omega}} \fint_{\Omega_r(x)}\abs{f-(f)_{\Omega_r(x)}},\quad \rho_f(r):=\sup_{|x-y|\leq r, x,y\in \Omega}\abs{f(x)-f(y)}.$$
We say that $f$ is of $L_1$-mean Dini oscillation (uniform Dini) if $\omega_f$ ($\rho_f$, respectively) is a Dini function.
\end{definition}
Clearly, any uniform Dini function is $L_1$-mean Dini. On the other hand, according to a result by Spanne in \cite{S}, on a domain $\Omega$ with exterior measure condition, any $L_1$-mean Dini function $f$ is uniformly continuous with modulus of continuity given by $I\omega_f$. Simple calculation shows that if both $f$ and $g$ are $L_1$-mean Dini functions defined on a bounded domain $\Omega$, then $fg$ is $L_1$-mean Dini.

A function $\psi$ is said to be $C^{1,\text{Dini}}$ if it is continuous with $D\psi$ being uniform Dini. Below we give the formal definition of $C^{1,\text{Dini}}$ domains.
\begin{definition}\label{def-C1Dini-domain}
A bounded domain $\Omega\subset\bR^d$ is said to have $C^{1,\text{Dini}}$ boundary if there exists some $C^{1,\text{Dini}}$ function $\psi_0:\bR^d\rightarrow\bR$, such that
$$\Omega=\set{\psi_0>0}, \quad \abs{D\psi_0}\geq 1\,\, \text{on }\p\Omega.$$
\end{definition}

Now we state our main result of this paper.
\begin{theorem}\label{thm-main}
Consider the problem \eqref{eqn-main-with-lower-order} in a $C^{1,\text{Dini}}$ domain $\Omega$. Assume that $\beta_0,\vec{\beta} \in C^{1,\text{Dini}}$, and $\vec{A}, \vec{b},c$ are of $L_1$-mean Dini oscillation. Let $u\in W^{2}_2(\Omega)$ be a strong solution to \eqref{eqn-main-with-lower-order} with $g \in C^{1,\text{Dini}}$ and $f$ being of $L_1$-mean Dini oscillation. Then $u\in C^2(\overline{\Omega})$.
\end{theorem}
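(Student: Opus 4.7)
The plan is to reduce the global $C^2$ regularity to a pointwise boundary $C^2$ estimate via a Campanato-type iteration. Since the interior $C^2$ estimate from \cite{DK17} already gives $u\in C^2(\Omega)$ under the assumed oscillation hypotheses on $\vec A$, $\vec b$, $c$, $f$, the task reduces to showing that $D^2u$ extends continuously up to $\partial\Omega$ with a uniform Dini modulus.

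Fix a boundary point, which after translation and rotation I may take to be the origin with inward normal $e_d$. The principal technical difficulty is that a naive flattening of a $C^{1,\text{Dini}}$ boundary would require second derivatives of the defining function $\psi_0$, which are not classically defined. The remedy is to employ a regularized distance $\tilde d$ in the spirit of \cite{Lie}, satisfying $\tilde d\simeq\dist(\cdot,\partial\Omega)$, smooth inside $\Omega$, with $D\tilde d$ uniformly Dini continuous up to $\partial\Omega$ and controlled blow-up of higher derivatives of the form $|D^k\tilde d(x)|\lesssim \rho_{D\psi_0}(\tilde d(x))/\tilde d(x)^{k-1}$ for $k\ge 2$. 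Using $\tilde d$ in place of the last coordinate, I would build a $C^{1,\text{Dini}}$ diffeomorphism $\Phi$ mapping $\Omega\cap B_r$ onto a half-ball, and push \eqref{eqn-main-with-lower-order} forward through $\Phi$; the transformed leading coefficients remain of $L_1$-mean Dini oscillation because this class is preserved under composition with $C^{1,\text{Dini}}$ diffeomorphisms, and the transformed boundary operator is again $C^{1,\text{Dini}}$ oblique on the flat piece.

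The next and most delicate step is the decomposition. At each dyadic scale $r>0$ I would freeze the leading coefficients and the coefficients of the boundary operator at $0$ and choose a quadratic polynomial $P_r$ solving the corresponding constant-coefficient oblique derivative problem, so that the residual $w:=\tilde u-P_r$ satisfies an oblique problem in which the source and boundary data carry only a Dini-small excess controlled by $\omega_{\vec A}(r)$, $\omega_f(r)$, $\rho_{\vec\beta}(r)$, $\rho_{\beta_0}(r)$, $\rho_g(r)$ and the regularized-distance correction. Applying the $W^2_p$ estimate of \cite{DL} to $w$ for some $p>d$ then yields
\[
\Bigl(\fint_{\Omega_r}|D^2u-D^2P_r|^p\Bigr)^{1/p}\;\le\; C\,\eta(r),
\]
with $\eta$ a Dini function assembled from the moduli above.

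From this excess estimate, a standard iteration comparing $P_r$ with $P_{r/2}$ shows that $\{D^2P_r\}$ is Cauchy as $r\to 0^+$ at a Dini rate, defining $D^2u(0)$ and producing a Dini modulus of continuity for $D^2u$ at the boundary; gluing the boundary moduli with the interior ones of \cite{DK17} via a standard matching argument for points approaching $\partial\Omega$ yields $u\in C^2(\overline{\Omega})$. The main obstacle I anticipate is the simultaneous coordination of the three error sources in the decomposition step: the mean oscillation of the coefficients, the Dini modulus of $\vec\beta$, and the irregular lower-order terms introduced by the regularized distance through the $D^2\tilde d$ contributions, which blow up near the boundary. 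Handling these requires that $P_r$ be chosen not as an $L^2$-best approximant but as an exact solver of the frozen oblique problem, so that the residual error comes entirely from quantifiable oscillation and modulus terms rather than from algebraic mismatches at the boundary.
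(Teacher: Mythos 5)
There is a genuine gap in the step where you apply the $W^2_p$ estimate of \cite{DL} with $p>d$ to the residual $w=\tilde u-P_r$. When you freeze the leading coefficients, the source term for $w$ contains $\bigl(a_{ij}-\bar a_{ij}\bigr)D_{ij}\tilde u$, whose $L_p$-norm on $\Omega_r$ is bounded by $\|D^2u\|_{L_\infty}\bigl(\fint_{\Omega_r}|a_{ij}-\bar a_{ij}|^p\bigr)^{1/p}$. For $p>d\geq 2$ this requires $L_p$-mean Dini oscillation of $\vec A$, but the hypothesis is only $L_1$-mean Dini. Boundedness of the coefficients gives at best $\bigl(\fint|a_{ij}-\bar a_{ij}|^p\bigr)^{1/p}\lesssim \omega_{\vec A}(r)^{1/p}$, and $\omega^{1/p}$ need not be Dini when $\omega$ is (take $\omega(r)=\log^{-2}(1/r)$ with $p\geq 2$). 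So your $\eta(r)$ would not be a Dini function, and the iteration comparing $P_r$ and $P_{r/2}$ would not close. This is precisely the obstruction that forces the paper to replace Calder\'on--Zygmund $W^2_p$ ($p>1$) by the weak type $(1,1)$ estimate for the model half-space problem (Lemma \ref{lem-weak-1-1}) and its strong $(1,p)$ consequence for $p\in(0,1)$ (Corollary \ref{cor-strong-p-1}): measuring $D^2$ of the ``bad'' piece in $L_p$ for $p<1$ against the source in $L_1$ is exactly what lets the coefficient oscillation enter only through its $L_1$ average.

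Two further points. First, the paper's decomposition is not a single-step freeze-and-correct at each scale: it writes $u=v+w+h$, where $w$ solves $-\Delta w+w=0$ with a carefully designed oblique condition (controlled by the $W^2_p$ theory of \cite{DL}, which is fine here because the data for $w$ are genuinely small in $W^1_p$), $h$ is an explicit parabola normalizing boundary data and its first-order Taylor remainder, and only then is $v$ flattened and split as $\theta+\xi$ with $\theta$ handled by the weak $(1,1)$ bound and $\xi$ by a constant-coefficient $C^3$ estimate (Lemma \ref{lem-C3}). Your single quadratic $P_r$ solving the frozen oblique problem compresses this, but the three error sources you list cannot all be pushed into a single $L_p$ ($p>d$) estimate. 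Second, you flag the $D^2\tilde d$ blow-up as an anticipated obstacle but do not say how to absorb it; the paper's resolution is Hardy's inequality combined with the fact that $\partial v/\partial z_d=0$ on $\Sigma$, so that $D^2\psi\cdot\partial_{z_d}v$ has an $L_2$-norm controlled by $\rho_{D\psi_0}(r)\|D^2_zv\|_{L_2}$ — this is essential, and without it the regularized-distance terms do not produce a Dini modulus.
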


\begin{remark}
Using the $W^2_p$-wellposedness in \cite{DL} and bootstrap, this result still holds if we replace $u \in W^2_2(\Omega)$ with $u\in W^2_q(\Omega)$ for some $q>1$.
\end{remark}
Noting the $W^2_{2}$ solvability in \cite{DL}, we immediately obtain the following.
\begin{corollary}
Besides the assumptions of Theorem \ref{thm-main}, if we further assume
\begin{equation*}
c\geq 0,\quad \beta_0\leq 0,\quad |c|^2+|\beta_0|^2 \not\equiv 0,
\end{equation*}
then there exists a unique $C^2(\overline{\Omega})$ solution to \eqref{eqn-main-with-lower-order}.
\end{corollary}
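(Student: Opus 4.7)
The plan is to combine the $W^2_2$ well-posedness from \cite{DL} with Theorem \ref{thm-main}: the former produces a unique strong solution, and the latter upgrades its regularity. All the analytic work has already been done, so the proof is essentially an assembly.

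Under the hypotheses of Theorem \ref{thm-main}, the leading coefficients $\vec{A}$ have $L_1$-mean Dini oscillation (hence, in particular, vanishing mean oscillation), the boundary operator data $\beta_0,\vec{\beta}$ are $C^{1,\text{Dini}}\subset C^\alpha$ for every $\alpha\in(0,1)$, and $\partial\Omega$ is $C^{1,\text{Dini}}$, so locally Lipschitz with arbitrarily small Lipschitz constant at small scales. These are strictly stronger than the assumptions needed for the $W^2_p$ solvability theory of \cite{DL}. The added sign conditions $c\geq 0$, $\beta_0\leq 0$, and $|c|^2+|\beta_0|^2\not\equiv 0$ are the classical structural conditions that force injectivity of $(L,B)$ via the maximum principle: $c\geq 0$ rules out positive interior maxima; Hopf's lemma combined with the obliqueness \eqref{eqn-oblique} and $\beta_0\leq 0$ rules out maxima attained on $\partial\Omega$; and the non-degeneracy $|c|^2+|\beta_0|^2\not\equiv 0$ removes the kernel of constant functions. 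By \cite{DL}, and noting that any $L_1$-mean Dini function on a bounded domain belongs to $L_2$ while any $C^{1,\text{Dini}}$ function belongs to $W^1_q$ for every $q$, there exists a unique $u\in W^2_2(\Omega)$ solving \eqref{eqn-main-with-lower-order}.

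Applying Theorem \ref{thm-main} to this strong solution immediately promotes it to $u\in C^2(\overline{\Omega})$, giving the existence half. Uniqueness in $C^2(\overline{\Omega})$ is then automatic, since any $C^2(\overline{\Omega})$ function on the bounded domain $\Omega$ lies in $W^2_2(\Omega)$ and the $W^2_2$ strong solution is already unique. The only verification that requires any care is that the Dini-type hypotheses here genuinely dominate the small-BMO/$C^\alpha$/small-Lipschitz hypotheses under which \cite{DL} is formulated; this is handled by Spanne's observation (recorded in the excerpt) that $L_1$-mean Dini oscillation implies uniform continuity, together with a standard scaling/covering argument to make the Lipschitz constant of $\partial\Omega$ small on sufficiently small balls. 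Since the entire analytic difficulty is already absorbed into Theorem \ref{thm-main} and into \cite{DL}, there is no real obstacle at the corollary level.
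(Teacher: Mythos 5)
Your proposal matches the paper's intended argument exactly: the paper simply remarks ``Noting the $W^2_2$ solvability in \cite{DL}, we immediately obtain the following,'' meaning that the sign conditions yield a unique $W^2_2$ strong solution via \cite{DL}, which Theorem \ref{thm-main} then upgrades to $C^2(\overline{\Omega})$, and uniqueness in $C^2(\overline{\Omega})$ follows because $C^2(\overline{\Omega})\subset W^2_2(\Omega)$. Your more detailed write-up (checking that Dini hypotheses dominate the VMO/small-Lipschitz hypotheses of \cite{DL}, and noting the role of the maximum principle in \cite{DL}'s uniqueness) is a correct elaboration of the same route.
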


Our approach is also applicable to concave fully nonlinear elliptic equations $$F[u]:=F(D^2u,Du,u,y)=0$$ under the ``$L_d$-mean Dini'' assumptions given below in (4).
\begin{assumption}\label{ass-nonlinear}
The function $F(M,p,u,y)$ defined on $\bS^{d\times d}\times \bR^d \times \bR\times \Omega$ satisfies
\begin{enumerate}[$(1)$]
\item
$F(\cdot,p,u,y)$ is concave.
\item
There exists a constant $\lambda>0$ such that
$$
\lambda \norm{M} \leq {F(M+N,p,u,y) - F(N,p,u,y)} \leq \lambda^{-1} \norm{M}
$$
for any $N\in\bS^{d\times d}$, $p\in \bR^d$, $u\in \bR$, $y\in \Omega$, and $M\in \bS^{d\times d}, M\geq 0$.
\item
$|F(0,0,0,y)| + [F(M,\cdot,\cdot,y)]_1 \leq K$ for any $M\in \bS^{d\times d}$ and $y\in \Omega$, where $[\ \cdot\ ]_1$ represents the Lipschitz semi-norm.
\item
There exist $R_0>0$ and a Dini function $\omega_F$, such that for any $x\in\overline{\Omega}, r\in(0,R_0]$, we can find some function $F_0:\bS^{d\times d}\times \bR^d \times \bR \rightarrow \bR$ satisfying (1)-(3), such that
\begin{equation*}
\left(\fint_{\Omega_r(x)} \abs{F(M,p,u,y)-F_0(M,p,u)}^d\, dy\right)^{1/d} \leq (|M|+|p|+|u|+1)\omega_F(r).
\end{equation*}
\end{enumerate}
\end{assumption}
It is worth noting that this class includes the Bellman equations
\begin{equation}\label{eqn-Bellman}
\sup_{\omega\in\cA} \{a_{ij}^{\omega}D_{ij} u + b_i^\omega D_iu + c^\omega u - f^\omega\} = 0,
\end{equation}
where all the coefficients as well as $f^\omega$ are uniformly bounded satisfying the $L_d$-mean Dini conditions of the following type:
\begin{equation*}
\left(\fint_{\Omega_r(x)}\sup_\omega\abs{a_{ij}^{\omega}(y)
-(a_{ij}^{\omega})_{\Omega_r(x)}}^d\,dy\right)^{1/d}\lesssim \omega(r).
\end{equation*}
We also impose the sign conditions
\begin{equation}\label{cond-sign}
F(M,p,\cdot,y)\,\,\text{is non-increasing and}\,\,\beta_0\leq-\gamma<0,
\end{equation}
where $\gamma>0$ is a constant.
Now we present the regularity result for fully nonlinear equations.
\begin{theorem}\label{thm-nonlinear}
In a bounded domain $\Omega$, consider the problem $F[u]=0$ with the oblique derivative boundary condition $Bu=g$ on $\partial \Omega$. Under the conditions $\p\Omega, \beta_0,\vec{\beta} \in C^{1,\text{Dini}}$, Assumption \ref{ass-nonlinear}, and \eqref{cond-sign}, for any $C^{1,\text{Dini}}$ boundary data $g$, there exists a unique $C^2(\overline{\Omega})$ solution.
\end{theorem}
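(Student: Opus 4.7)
The plan is to obtain existence by the method of continuity with the linear theory of Theorem \ref{thm-main} as the starting point, and to establish the $C^2(\overline{\Omega})$ regularity by a Caffarelli-type perturbation in which, at each boundary point, the solution is compared with one of the frozen concave oblique problem. Uniqueness and a uniform $L^\infty$ a priori bound follow from the comparison principle, which is available from the concavity hypothesis together with the sign conditions \eqref{cond-sign}.

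For existence, I would consider the homotopy $F_t[u]:=(1-t)(-\Delta u)+tF[u]$ with the same oblique boundary operator $B$; at $t=0$ the corollary to Theorem \ref{thm-main} provides a $C^2$ solution since $\beta_0\le-\gamma<0$. After regularizing the data by standard mollification to reduce to the H\"older setting, Krylov--Safonov--Lieberman estimates together with the boundary $C^{2,\alpha}$ estimate of Lieberman--Trudinger for concave fully nonlinear oblique problems furnish a uniform $C^{2,\alpha}$ a priori bound along the path, so the method of continuity yields a solution at $t=1$. The $C^2(\overline{\Omega})$ regularity for the non-mollified problem then follows from the a priori estimate below, which is uniform in the mollification parameter.

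For the regularity, fix $x_0\in\overline{\Omega}$ and a scale $r\in(0,R_0]$. Flatten $\p\Omega$ near $x_0$ by the regularized distance associated with $\psi_0$ as in the proof of Theorem \ref{thm-main}; this preserves the $L_d$-mean Dini structure of the coefficients. Let $v_r$ solve, in $\Omega_{2r}(x_0)$, the frozen concave oblique problem $F_0(D^2v,Dv,v)=0$ with the constant-coefficient oblique condition obtained by freezing $\beta_0,\vec\beta$ at $x_0$ and replacing $g$ by its first-order Taylor polynomial at $x_0$. Interior Evans--Krylov together with the boundary $C^{2,\alpha}$ estimate for concave oblique problems on a half-space provide
\[
[D^2v_r]_{C^\alpha(\Omega_r(x_0))}\le Cr^{-\alpha}(\|D^2u\|_{L^\infty}+1).
\]
Setting $w=u-v_r$, concavity and the structural conditions in Assumption \ref{ass-nonlinear} give Pucci operator inequalities whose right-hand side is controlled in $L^d(\Omega_{2r}(x_0))$ by $(\|D^2u\|_{L^\infty}+1)\omega_F(r)$, with a boundary error of order $r\rho_{\vec\beta}(r)+\rho_{\beta_0}(r)+\rho_g(r)+r\rho_{D\psi_0}(r)$. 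A $W^2_d$/ABP-type bound for Pucci problems with oblique BC, available through the linear theory of \cite{DL}, then yields $\|w\|_{L^\infty(\Omega_r(x_0))}\le Cr^2\eta(r)$ for a Dini function $\eta$ built from $\omega_F$ and the boundary moduli. Combining this with the $C^{2,\alpha}$ bound on $v_r$ and iterating across dyadic scales produces a Campanato-type decay
\[
\fint_{\Omega_r(x_0)}\bigabs{D^2u-(D^2u)_{\Omega_r(x_0)}}\,dy\le\widetilde\eta(r),
\]
with $\widetilde\eta$ Dini; Spanne's theorem then gives continuity of $D^2u$ on $\overline{\Omega}$ with modulus $I\widetilde\eta$, so $u\in C^2(\overline{\Omega})$.

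The principal obstacle is the boundary $C^{2,\alpha}$ estimate for the frozen concave fully nonlinear problem with constant oblique BC on a half-space, which plays the role in the nonlinear theory that the sharp boundary linear estimates play in the proof of Theorem \ref{thm-main}. A secondary but essential point is the verification that the regularized distance flattening the $C^{1,\text{Dini}}$ boundary preserves both the concavity of the frozen operator, up to a harmless linear perturbation in the gradient and lower-order variables, and the $L_d$-mean Dini character of the coefficients -- the same technical hurdle that governs the linear proof of Theorem \ref{thm-main}.
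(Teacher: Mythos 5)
Your outline takes a genuinely different route from the paper's in the key regularity step. The paper iterates on the $L_\epsi$-mean oscillation $\phi(x,r)$ of $D^2u$ (as in the linear Proposition \ref{prop-mean-oscillation}), using a decomposition $u=v+w+h$ in which $w$ solves an auxiliary \emph{linear} oblique problem to homogenize the boundary condition to a pure Neumann condition $\p v/\p y_d=0$, $h$ is a parabola that cancels the first-order parts of the boundary data, and the remainder $\theta = v-\xi$ is controlled by the Lin-type boundary $W^2_\epsi$ estimate (Lemma \ref{lem-w2epsi}) applied to the concavity-induced linearization. You instead propose a Caffarelli-type $L^\infty$-perturbation: compare $u$ with a solution $v_r$ of a frozen concave oblique problem and bound $\|u-v_r\|_{L^\infty(\Omega_r)}$ via ABP and Pucci inequalities, then iterate. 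Both routes are reasonable in spirit, but as written your boundary-error bookkeeping does not close.

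The concrete gap is in the claim that ABP yields $\|w\|_{L^\infty(\Omega_r(x_0))}\le Cr^2\eta(r)$ with $\eta$ Dini. You freeze $\beta_0,\vec\beta$ at $x_0$ and replace $g$ by its linearization, so on $\Gamma_r$ the difference $w=u-v_r$ satisfies an oblique condition with inhomogeneity $\mu$ of size $\rho_{\beta_0}(r)\|u\|_{L^\infty}+\rho_{\vec\beta}(r)\|Dv_r\|_{L^\infty}+O(r\rho_{Dg}(r))$. Since $\beta_0,\vec\beta\in C^{1,\text{Dini}}$ only implies $\rho_{\beta_0}(r),\rho_{\vec\beta}(r)\lesssim r$, the first two terms are $O(r)$, not $O(r^2\eta(r))$. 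In Lieberman's ABP for oblique problems (\cite[Theorem~6.1]{Lie-book}) the oblique boundary data contributes to $\|w\|_{L^\infty}$ at zeroth order (through $\mu/\beta_0$), so you obtain only $\|w\|_{L^\infty(\Omega_r)}=O(r)$, which is far from the $r^2\eta(r)$ decay needed to make a dyadic Caffarelli iteration produce a $C^2$ estimate. The paper circumvents precisely this by (i) bootstrapping via \cite{DL} to move $\beta_0 u$ to the right-hand side so the boundary condition has no zeroth-order term and rescales correctly, and (ii) subtracting the parabola $h$ of \eqref{eqn-def-h} so that the residual Neumann data is $O(r\rho_{D\vec\beta}(r)+r\rho_{Dg}(r))$, a genuinely Dini-small quantity of the right homogeneity. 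Without analogues of both steps, the $r^2\eta(r)$ decay you assert does not follow. A secondary but real concern is the reference to ``a $W^2_d$/ABP-type bound for Pucci problems with oblique BC, available through the linear theory of \cite{DL}'': \cite{DL} treats linear non-divergence operators, and one needs the viscosity-solution comparison framework for oblique problems (cf.\ \cite{I,Lie-book}) to justify the Pucci inequalities with oblique BC; this is not automatic and should be made explicit.
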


Bellman equations \eqref{eqn-Bellman} with oblique derivative boundary condition arise naturally in the study of optimal stochastic control in domain $\Omega$ with reflecting boundary conditions. In \cite{LT}, Lions and Trudinger first studied the $C^{1,1}(\overline{\Omega})\cap C^{2,\alpha}_{loc}(\Omega)$ solvability of Bellman equations, assuming that all the coefficients, $\p\Omega$, and the boundary operators are sufficiently smooth. Later in \cite{Sa}, Safonov proved the unique $C^{2,\alpha}(\overline{\Omega})$ solvability of \eqref{eqn-Bellman} under the relaxed conditions
\begin{equation*}
[a_{ij}^\omega]_{C^\alpha}+[b_i^\omega]_{C^\alpha}+[c^\omega]_{C^\alpha} \leq K,\,\,\forall \omega,\quad \p\Omega, \beta_0, \vec{\beta}\in C^{1,\alpha}.
\end{equation*}
See also \cite{Sa-notes}. Recently, there are also study of similar problems using the viscosity solution approach. For its framework and the solvability, we refer the reader to \cite{I}. In this direction, Milakis and Silvestre in \cite{MS} studied the fully nonlinear, uniformly elliptic equation $F(D^2u)=0$ with Neumann boundary condition on half balls. They showed the $C^\alpha, C^{1,\alpha}$ regularity of viscosity solutions, and the $C^{2,\alpha}$ regularity when $F$ is convex. Later in \cite{LZ}, Li and Zhang proved a similar result for $F(D^2u)=0$ with oblique derivative boundary condition in domain $\Omega$. In particular, when $\p\Omega\in C^{1,\alpha}$ and $F$ is convex, they showed that any viscosity solution is in $C^{2,\alpha}$. The key step there is to prove a boundary Harnack inequality based on an Aleksandrov-Bakel'man-Pucci type estimate, and then design approximating problems. In our paper, due to the usage of the Campanato-type iteration, we are able to deal with the Dini case. Moreover, our operator $F$ are more general depending on lower-order terms and the variable $x$.

\section{Preliminary}
\subsection{Regularized distance and coordinate system}\label{sec-reg-dist}
In Definition \ref{def-C1Dini-domain}, a representation function $\psi_0$ is given for the $C^{1,\text{Dini}}$ domain $\Omega$. Here for studying problems with rough boundaries, we mollify $\psi_0$ properly to obtain a more suitable representation. This is the regularized distance. As in \cite[Lemma~5.1]{DLK}, we can find some function $\psi \in C^{1,\text{Dini}}(\bR^d)\cap C^{\infty}(\Omega)$, satisfying
\begin{equation*}
\Omega=\set{\psi>0},\quad N^{-1}\psi(x) \leq \dist(x,\p\Omega) \leq N\psi(x),\,\, \forall x\in\Omega,
\end{equation*}
\begin{equation}\label{est-moc-psi-moc-psi0}
\abs{D\psi}\leq 1\quad\text{in } \overline{\Omega},\quad \rho_{D\psi}(t)\leq N\rho_{D\psi_0}(ct),
\end{equation}
and for any multi-index $l$ with $\abs{l}=m\geq 2$,
\begin{equation}\label{est-reg-dist-higher-order}
\abs{D^l\psi(x)}\leq C\psi(x)^{1-m}\rho_{D\psi_0}(\psi(x)),\quad \forall x\in\Omega.
\end{equation}
Here $C=C(d,m,\norm{D\psi_0}_{L_\infty(\Omega)})$, and $N, c\in [2,\infty)$ are positive constants depending on $(d,\norm{D\psi_0}_{L_\infty(\Omega)})$.

Next, we introduce the coordinate system adapted to our oblique derivative problem. For any $x_0\in\p\Omega$, we choose an orthonormal coordinate system $y=(y_1,\ldots, y_d)$ centered at $x_0$ such that the $y_d$-axis is in the $\vec{\beta}(x_0)$ direction. Now, noting that $D\psi$ is a inward normal on the boundary, due to the obliqueness \eqref{eqn-oblique}, continuity of $D\psi$, and compactness, we can choose some $R_0 \in (0,1)$ independent of $x_0$, such that \begin{equation}\label{eqn-def-R0}
\frac{\beta(x_0)}{|\beta(x_0)|}D\psi=\frac{\p \psi}{\p y_d}\geq \frac{\delta}{2}\abs{D\psi} \quad \text{in } \Omega_{R_0}(x_0).
\end{equation}
Direct computation shows that there exists some constant $c(\delta,d)>0$, such that for any $r\in(0,R_0],$
\begin{equation}\label{est-nbhd-equiv-ball}
c(\delta,d) < \abs{\Omega_r(x_0)}/\abs{B_r(x_0)}<1.
\end{equation}

\subsection{Estimates on the half space}
We will use the notation
\begin{equation*}
B_r^{+}:=B_r\cap \{x:x_d>0\},\quad \Sigma_r:=\{x:|x|<r, x_d=0\},\quad \sigma_r:=\p B_r\cap \{x:x_d>0\}
\end{equation*}
 throughout this paper.
The first result is a weak type-$(1,1)$ estimate given in \cite[Lemma~2.13]{DLK}.
\begin{lemma}\label{lem-weak-1-1}
Let $\overline{\vec{A}}=(\overline{a_{ij}})$ be a constant symmetric matrix with elliptic constant $\lambda$. Assuming that $f\in L_2$, $u\in W^{2}_2(B_1^{+})$ satisfy
\begin{equation*}
-\overline{a_{ij}}D_{ij}u=f\,\, \text{in}\,B^{+}_1,\quad D_du=0 \,\,\text{on}\,\Sigma_1,\quad u =0\,\,\text{on}\,\sigma_1,
\end{equation*}
then for any $t>0$, we have
\begin{equation}
                                        \label{eq11.09}
\abs{\set{\abs{D^2u}>t}\cap B_1^{+}} \leq \frac{C}{t} \fint_{B_1^{+}}\abs{f},
\end{equation}
where $C=C(d,\lambda)>0$ is a constant.
\end{lemma}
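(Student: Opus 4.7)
The plan is to reduce the assertion to a classical Calder\'on--Zygmund weak type-$(1,1)$ estimate for a Dirichlet problem on the full ball $B_1$, via an even reflection across the flat boundary $\Sigma_1$. First I would define the extension $\tilde u(x',x_d) := u(x',|x_d|)$ on $B_1$, together with the even extension $\tilde f$ of $f$. The Neumann condition $D_d u = 0$ on $\Sigma_1$ (interpreted in the trace sense) makes the odd-order normal derivative vanish across the interface, so $\tilde u \in W^{2}_2(B_1)$, and the Dirichlet condition $u=0$ on $\sigma_1$ becomes $\tilde u = 0$ on all of $\partial B_1$.

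Next, a direct chain-rule computation shows that on the lower half $\{x_d<0\}$ the reflected function satisfies $-\overline{a_{ij}^{\ast}}\,D_{ij}\tilde u = \tilde f$, where $\overline{a_{ij}^{\ast}} = \overline{a_{ij}}$ except for the cross entries $\overline{a_{id}^{\ast}} = -\overline{a_{id}}$, $i<d$, which flip sign. Both $\overline{\vec A}$ and $\overline{\vec A^{\ast}}$ share the ellipticity constant $\lambda$, so if one sets $\tilde a_{ij}(x) := \overline{a_{ij}}$ for $x_d>0$ and $\overline{a_{ij}^{\ast}}$ for $x_d<0$, then $\tilde u$ solves the uniformly elliptic Dirichlet problem $-\tilde a_{ij}(x) D_{ij}\tilde u = \tilde f$ on $B_1$ for a piecewise-constant coefficient matrix with a single jump across $\Sigma_1$.

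The weak type-$(1,1)$ bound for $D^2\tilde u$ then follows from a Calder\'on--Zygmund analysis of the Green's function $G$ of this Dirichlet problem: one writes $\tilde u(x) = \int_{B_1}G(x,y)\tilde f(y)\,dy$, verifies that $D_x^2 G(x,\cdot)$ is a standard Calder\'on--Zygmund kernel with the usual size and H\"older regularity estimates uniformly in $x$, and then invokes the classical weak $(1,1)$ estimate for singular integral operators applied to $\tilde f$. Restricting to $B_1^{+}$ and using $|\tilde f| = |f|$ under the reflection yields \eqref{eq11.09}. The main obstacle I expect is establishing the standard Calder\'on--Zygmund kernel bounds on $D_x^2 G$ despite the coefficient jump across $\Sigma_1$; this delicate step can be carried out either by a direct parametrix/integration-by-parts argument or by appealing to the general weak $(1,1)$ theory for elliptic operators with BMO coefficients, and is precisely what is done in \cite[Lemma~2.13]{DLK}, on which the present lemma relies.
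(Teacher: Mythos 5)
The paper itself gives no proof of this lemma --- it is stated as a direct citation of \cite[Lemma~2.13]{DLK} --- so the relevant question is whether your sketch would close on its own. Your first step is correct and is the standard reduction: the even extension $\tilde u(x',x_d)=u(x',|x_d|)$ lies in $W^2_2(B_1)$ precisely because $D_d u=0$ on $\Sigma_1$, and the chain rule shows $\tilde u$ solves a Dirichlet problem on $B_1$ whose coefficient matrix is piecewise constant and jumps across $\Sigma_1$ in the entries $a_{id}$, $i<d$ (unless these happen to vanish, which is not assumed here).

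The gap is the second step. Your fallback of ``appealing to the general weak $(1,1)$ theory for elliptic operators with BMO coefficients'' is not available: an endpoint weak-$(1,1)$ bound for $D^2u$ with a universal constant fails under mere BMO, VMO, or even uniform-continuity hypotheses on the leading coefficients --- this is essentially the content of the Escauriaza--Montaner example \cite{EM} that the present paper quotes as the very motivation for the Dini-mean-oscillation framework. Your alternative, a Calder\'on--Zygmund analysis of the Green's function, can be made to work, but the H\"ormander cancellation you need on $D_x^2G(x,\cdot)$ is a statement about $y$-regularity of the kernel, and the reflected coefficients jump in $y$ as well; obtaining $|D_x^2G(x,y)-D_x^2G(x,y_0)|\lesssim |y-y_0|^{\alpha}/|x-y_0|^{d+\alpha}$ requires exploiting that the post-reflection coefficients depend only on $\mathrm{sgn}(x_d)$ with the particular sign-flip pattern, e.g., by Fourier transform in $x'$ and ODE analysis in $x_d$, or by comparison with the constant-coefficient Green's function on each half. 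That work is precisely what \cite[Lemma~2.13]{DLK} supplies; as written, your sketch finds the right reduction and correctly flags the obstacle, but one of the two routes you propose (the BMO route) would not work at all, and the other is left essentially unjustified.
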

As a corollary, we have the following strong type-$(1,p)$ estimate.
\begin{corollary}\label{cor-strong-p-1}
Under the assumptions of Lemma \ref{lem-weak-1-1}, for any $p\in(0,1)$ we have
\begin{equation*}
\big(\fint_{B_1^{+}} \abs{D^2u}^p\big)^{1/p} \leq C \fint_{B_1^{+}}\abs{f},
\end{equation*}
where $C=C(d,\lambda,p)>0$ is a constant.
\end{corollary}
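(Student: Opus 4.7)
The plan is to deduce the strong $L_p$ bound from the weak type-$(1,1)$ inequality \eqref{eq11.09} by means of the layer cake (distribution function) representation. Since $p\in(0,1)$, the tail integral $\int t^{p-2}\,dt$ converges near infinity, which is precisely why one can afford to "lose one power" and still obtain a strong bound.

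Set $M:=\fint_{B_1^{+}}|f|$. Starting from
\[
\int_{B_1^{+}}\abs{D^2u}^p = p\int_0^\infty t^{p-1}\,\bigabs{\set{\abs{D^2u}>t}\cap B_1^{+}}\,dt,
\]
I would split the integral at the natural threshold $t=M$. On the range $t\in(0,M]$ I would use the trivial estimate $|\{\abs{D^2u}>t\}\cap B_1^{+}|\le|B_1^{+}|$, which contributes at most $|B_1^{+}|M^p$. On the range $t\in(M,\infty)$ I would apply Lemma \ref{lem-weak-1-1} to bound the distribution function by $CM/t$, yielding
\[
p\int_M^\infty t^{p-1}\cdot\frac{CM}{t}\,dt = \frac{Cp}{1-p}\,M^{p},
\]
where convergence at infinity uses $p<1$.

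Combining the two contributions gives $\int_{B_1^{+}}|D^2u|^p\le C(d,\lambda,p)|B_1^{+}|M^p$. Dividing by $|B_1^{+}|$ and taking the $p$-th root then delivers the claimed inequality with constant $C=C(d,\lambda,p)$. There is no serious obstacle here; the only point to watch is that the constant blows up as $p\uparrow 1$ (through the factor $1/(1-p)$), which is of course consistent with the failure of the strong type-$(1,1)$ bound and is harmless since one only needs some fixed $p<1$ in the subsequent iteration.
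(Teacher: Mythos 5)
Your proof is correct and is essentially the same as the paper's: both use the layer cake formula, split the integral at a threshold, apply the trivial bound on the low part and the weak type-$(1,1)$ estimate from Lemma \ref{lem-weak-1-1} on the high part. The only cosmetic difference is that you substitute the natural choice $T=M$ directly, whereas the paper leaves $T$ free and then says ``minimizing in $T$''; these give the same constant up to the dimensional factor $|B_1^+|$.
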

\begin{proof}
By using \eqref{eq11.09}, for any $T\in (0,\infty)$,
\begin{align*}
\int_{B_1^{+}} \abs{D^2u}^p &= \int_0^\infty p t^{p-1} \abs{\set{\abs{D^2u}>t}\cap B_1^{+}} \,dt\\
&\leq \int_0^T p t^{p-1} \abs{B_1^{+}} \,dt + C\int_T^\infty p t^{p-2} \big(\fint_{B_1^{+}}\abs{f}\big) \,dt\\
&\leq T^p \abs{B_1^{+}} + C\frac{p}{1-p}T^{p-1} \fint_{B_1^{+}}\abs{f}.
\end{align*}
Minimizing in $T$, we obtain the desired estimate.
\end{proof}

We also need the following $C^3$ estimate.
\begin{lemma}\label{lem-C3}
Let $p\in (0,1)$. Assume that $u\in W^2_2(B^{+}_1)$ is a strong solution to
\begin{equation*}
-\overline{a_{ij}}D_{ij}u=\overline{f}\quad \text{in}\,B^{+}_1,\quad D_d u =0\quad\text{on}\,\Sigma_1,
\end{equation*}
where $\overline{a_{ij}}$ is a constant matrix as before and $\overline{f}$ is a constant. Then we have $u\in C^3(\overline{B_{1/2}^{+}})$ satisfying
\begin{equation}\label{est-Lip-Lp}
\norm{D^3 u}_{L_\infty(B_{1/2}^{+})} \leq C(\abs{D^2u - \vq}^p)^{1/p}_{B_1^{+}},
\end{equation}
where $\vq\in \bS^{d\times d}$ can be any constant symmetric matrix and $C=C(d, p, \lambda)>0$.
\end{lemma}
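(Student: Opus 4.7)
The plan is to handle the flexibility of the constant matrix $\vec{q}$ by reducing the problem to a homogeneous Neumann problem satisfied by certain second derivatives of a translate of $u$. Set $\Psi(x) := \tfrac{1}{2} q_{ij} x_i x_j$ and $\tilde u := u - \Psi$, so that $D^2 \tilde u = D^2 u - \vec{q}$ and $D^3 \tilde u = D^3 u$, while $\tilde u$ solves the constant-coefficient equation $-\overline{a_{ij}} D_{ij} \tilde u = \overline f - \overline{a_{ij}} q_{ij}$ in $B_1^+$ with the inhomogeneous oblique boundary condition $D_d \tilde u|_{\Sigma_1} = -q_{d\alpha} x_\alpha$, linear in $x' = (x_1, \ldots, x_{d-1})$.

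The key observation is that although $\tilde u$ itself does not satisfy a homogeneous Neumann condition, its second tangential derivatives do. For any $\alpha, \beta \in \{1, \ldots, d-1\}$, the function $w := D_{\alpha\beta} \tilde u$ satisfies the homogeneous equation $-\overline{a_{ij}} D_{ij} w = 0$ in $B_1^+$ (the constant right-hand side is killed by differentiation) and the homogeneous Neumann condition $D_d w|_{\Sigma_1} = D_{\alpha\beta}(-q_{d\gamma} x_\gamma) = 0$, since two tangential derivatives annihilate the linear-in-$x'$ boundary data. Hence each such $w$ is a classical solution of a homogeneous constant-coefficient Neumann problem on the flat-boundary half ball.

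Standard elliptic regularity for such problems --- tangential differentiation preserves both the equation and the Neumann BC, while normal derivatives are recovered from the equation, combined with interior $L^\infty$-$L^p$ reverse H\"older / mean-value estimates for solutions of constant-coefficient elliptic equations --- yields $w \in C^\infty(\overline{B_{1/2}^+})$ together with the quantitative bound
\[
\|D w\|_{L^\infty(B_{1/2}^+)}
\leq C \Bigl(\fint_{B_1^+} |w|^p\Bigr)^{1/p}
\leq C \Bigl(\fint_{B_1^+} |D^2 u - \vec{q}|^p\Bigr)^{1/p}.
\]
Since $D_\gamma w = D_{\alpha\beta\gamma} \tilde u = D_{\alpha\beta\gamma} u$ (because $\Psi$ is a quadratic), varying $\alpha, \beta \in \{1,\dots,d-1\}$ and $\gamma \in \{1, \ldots, d\}$ controls every component of $D^3 u$ that has at least two tangential indices.

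The remaining components $D_{\alpha dd} u$ and $D_{ddd} u$ are recovered algebraically by differentiating the PDE for $\tilde u$ in $D_\alpha$ (respectively $D_d$) and solving for the normal-heavy term using the ellipticity bound $\overline{a_{dd}} \ge \lambda > 0$; this expresses them as linear combinations of the third derivatives already controlled above. The main subtlety, neatly bypassed by the double-tangential-differentiation in the second step, is the appearance of the inhomogeneous linear oblique boundary data $-q_{d\alpha} x_\alpha$ arising from the non-block-diagonal part of $\vec{q}$; without the observation that two tangential differentiations annihilate this linear data, one would need a delicate bootstrap or scaling argument to extend the estimate from block-diagonal $\vec{q}$ to general $\vec{q}$.
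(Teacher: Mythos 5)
Your argument is correct and follows essentially the same route as the paper: the tangential second derivatives $D_{\alpha\beta}u - q_{\alpha\beta}$ (with $\alpha,\beta<d$) solve a homogeneous constant-coefficient Neumann problem on the half ball, the normal-heavy components $D_{\alpha dd}u$ and $D_{ddd}u$ are recovered algebraically from the differentiated PDE using $\overline{a_{dd}}\ge\lambda$, and the passage to the $L^\infty$--$L^p$ bound with $p<1$ (which you describe somewhat loosely as a ``standard reverse H\"older / mean-value estimate'') is exactly the interpolation-plus-iteration step the paper cites from \cite[Lemma~2.10]{D-book}, applied to the Lipschitz--$L_2$ estimate obtained via finite-difference quotients and boundary Schauder theory. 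The one stylistic difference is your subtraction of the quadratic $\Psi$, which is harmless but unnecessary and actually manufactures the inhomogeneous linear oblique boundary datum $-q_{d\alpha}x_\alpha$ that you then must annihilate by two tangential differentiations; the paper simply observes that $D_{\alpha\beta}u - q_{\alpha\beta}$ already satisfies $D_d(D_{\alpha\beta}u - q_{\alpha\beta})=D_{\alpha\beta}(D_du)=0$ on $\Sigma_1$, so the ``main subtlety'' you highlight in your closing paragraph never arises.
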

\begin{proof}
First, note that for $x'=(x_1,\ldots,x_{d-1})$, formally $U:=D^2_{x'}u-\vec{q}$ is a solution to
\begin{equation*}
-\overline{a_{ij}}D_{ij}U=0\quad \text{in }B^{+}_1,\quad D_d U =0\quad\text{on}\,\Sigma_1.
\end{equation*}
By using the argument of finite-difference quotients, as a corollary of the Schauder estimate for elliptic equations with the Neumann boundary condition, we have the Lipschitz estimate
\begin{equation*}
\norm{DD^2_{x'}u}_{L_\infty(B_r^{+})}\leq \frac{C}{(R-r)^{1+d/2}}\norm{D^2_{x'}u-\vec{q}}_{L_2(B_R^{+})},
\end{equation*}
for any $0<r<R<1$. From this, we first differentiate the equation in the $x'$ direction to obtain the corresponding estimate for $D_d^2D_{x'}u$. Then we differentiate in the $x_d$ direction for $D^3_du$. Combining these, we have
\begin{equation*}
\norm{D^3u}_{L_\infty(B_r^{+})}\leq \frac{C}{(R-r)^{1+d/2}}\norm{D^2u-\vec{q}}_{L_2(B_R^{+})}.
\end{equation*}
From this we can obtain \eqref{est-Lip-Lp} using interpolation and an iteration argument which can be found, for instance, in \cite[Lemma~2.10]{D-book}.
\end{proof}

\subsection{$L_p$-mean oscillation and more on Dini functions}\label{sec-mosc-minimizer}
In this paper, the following $L_p$-mean oscillation of $D^2u$ will be intensively studied. For any $x\in\overline{\Omega}$ and $p\in(0,1)$,
\begin{equation}\label{eqn-def-Lp-mean-osc}
\phi(x,r) := \inf_{\vq\in \bS^{d\times d}} \big(\fint_{\Omega_r(x)}\abs{D^2u - \vq}^p\big)^{1/p}.
\end{equation}
We note a few properties of such $L_p$-mean oscillation:
\begin{enumerate}
\item If $D^2u\in L_{p,loc}$, for each $x,r$ we can find at least one minimizer. In this paper, we write $\vq_{x,r}$ for such a minimizer.
\item If $D^2u$ is continuous at $x$, then $\vq_{x,r}\rightarrow D^2u(x)$ as $r\rightarrow \infty$. Indeed,
\begin{equation*}
\begin{split}
\abs{D^2u(x)-\vq_{x,r}}^p &\leq \fint_{\Omega_r(x)}\abs{D^2u(z)-\vq_{x,r}}^p\,dz + \fint_{\Omega_r(x)}\abs{D^2u(z)-D^2u(x)}^p\,dz\\
&\leq 2\fint_{\Omega_r(x)}\abs{D^2u(z)-D^2u(x)}^p\,dz\rightarrow 0\,\,\text{as }r\rightarrow 0.
\end{split}
\end{equation*}
Here in the last inequality, we used the fact that $\vq_{x,r}$ is a minimizer.
\end{enumerate}
Later in the proof we will see, many steps in the classical Campanato's iteration for $L_2$-mean oscillation still work if we replace $(D^2u)_{\Omega_r(x)}$ with $\vq_{r,x}$.

The following property of Dini functions is useful in our iteration argument. This iteration was introduced in \cite{D}, and was also used in \cite{DK17,DEK,DLK} for studying equations with $L_1$-mean Dini coefficients. A nonnegative function $\omega$ is called comparable if there exists some constant $\kappa\in(0,1)$, such that
\begin{equation}\label{eqn-comparability}
\omega(s)/\omega(t)\leq C(\kappa),\quad \forall s,t\in(0,1]\,\,\text{and}\,s/t\in[\kappa,\kappa^{-1}].
\end{equation}
\begin{lemma}\label{lem-induced-Dini-function}
Assume that $\omega$ is a Dini function satisfying the almost increasing condition
\begin{equation}
                        \label{eq4.58}
\omega(r)\ge C\omega(s),\quad\forall\ r/2<s<r<\infty
\end{equation}
for some constant $C>0$. Then
\begin{equation}\label{eqn-derived-Dini-function}
\widetilde{\omega}(r):=\sum_{i=0}^\infty (1/2)^{i}\big(\omega(\kappa^{-i}r)1_{\kappa^{-i}r<1} + \omega(1)1_{\kappa^{-i}r\geq 1}\big)
\end{equation}
is also a Dini function satisfying \eqref{eqn-comparability}.
Furthermore, up to a constant depending only on $\kappa$, we have
\begin{equation*}
I\widetilde{\omega}(r)\simeq \sum_{j=0}^{\infty}\widetilde{\omega}(\kappa^jr).
\end{equation*}
In particular, $\widetilde{\omega}(r)\rightarrow 0$ as $r\rightarrow 0$.
\end{lemma}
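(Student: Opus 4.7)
The lemma bundles four claims about the series $\widetilde{\omega}$, and my plan is to verify them in order, each resting on the previous. The engine is a scaling identity obtained by relabeling $i\mapsto i+1$,
\[
\widetilde{\omega}(\kappa r)=\omega(\kappa r)1_{\kappa r<1}+\tfrac{1}{2}\widetilde{\omega}(r),
\]
together with the almost-increasing hypothesis \eqref{eq4.58}, which when iterated a bounded (depending on $\kappa$) number of times yields $\omega(\kappa^{-i}s)\simeq\omega(\kappa^{-i}r)$ whenever $s/r\in[\kappa,\kappa^{-1}]$, uniformly in $i$.

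First, to see $\widetilde{\omega}$ is Dini I will swap sum and integral in
\[
I\widetilde{\omega}(1)=\sum_{i=0}^\infty (1/2)^i\int_0^1\frac{\omega(\kappa^{-i}r)1_{\kappa^{-i}r<1}+\omega(1)1_{\kappa^{-i}r\ge 1}}{r}\,dr,
\]
which is legal by positivity. The change of variables $t=\kappa^{-i}r$ bounds the first piece by $I\omega(1)$, while the second piece equals $\omega(1)\log(\kappa^{-i})$, a quantity linear in $i$; summing against $(1/2)^i$ gives something finite.

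Second, for the comparability \eqref{eqn-comparability} of $\widetilde{\omega}$, the scaling identity immediately provides the lower bound $\widetilde{\omega}(\kappa r)\ge \tfrac{1}{2}\widetilde{\omega}(r)$, while the upper bound $\widetilde{\omega}(\kappa r)\le C(\kappa)\widetilde{\omega}(r)$ follows from the chain $\omega(\kappa r)\le C(\kappa)\omega(r)\le C(\kappa)\widetilde{\omega}(r)$: the first inequality is \eqref{eq4.58} iterated $O(\log_2\kappa^{-1})$ times, and the second is the $i=0$ term in the definition of $\widetilde{\omega}(r)$. To promote from ratio $\kappa$ to all ratios in $[\kappa,\kappa^{-1}]$, I will compare the defining series term by term: for $s/r\in[\kappa,1]$ and each $i$, the arguments $\kappa^{-i}s$ and $\kappa^{-i}r$ differ by at most the factor $\kappa^{-1}$, so another application of the iterated almost-increasing condition gives $\omega(\kappa^{-i}s)\simeq \omega(\kappa^{-i}r)$ with constants independent of $i$.

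Third, the equivalence $I\widetilde{\omega}(r)\simeq\sum_{j\ge 0}\widetilde{\omega}(\kappa^j r)$ is a standard discretization based on the comparability just established: on each interval $(\kappa^{j+1}r,\kappa^j r)$ the function $\widetilde{\omega}$ is uniformly comparable to $\widetilde{\omega}(\kappa^j r)$, and the interval has logarithmic length $|\log\kappa|$; summing across $j\ge0$ covers $(0,r)$ and yields the claimed equivalence up to a factor depending only on $\kappa$. The final assertion $\widetilde{\omega}(r)\to 0$ then follows, because finiteness of $I\widetilde{\omega}(1)$ forces $\widetilde{\omega}(\kappa^j)\to 0$ along the geometric sequence, and comparability fills in the rest.

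I expect the main bookkeeping difficulty to be in Step 2 near the indicator transition: for a given $r$, the threshold index $i^*$ with $\kappa^{-i^*}r<1\le\kappa^{-i^*+1}r$ can differ by at most one between $r$ and $\kappa r$, so only one term is affected, and since $\omega(\kappa^{-i^*}r)$ is comparable to $\omega(1)$ (once more by iterating \eqref{eq4.58}), this contributes only a bounded factor. Everything else is routine once the scaling identity is in hand.
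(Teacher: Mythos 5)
Your overall plan is the natural one, and Steps~1, 3, and 4 are correct in substance. The computation of $I\widetilde\omega(1)$ by interchanging sum and integral, with the change of variables $t=\kappa^{-i}r$ giving $I\omega(1)$ for the first piece and $\omega(1)\,i\log(1/\kappa)$ for the second, is fine. The discretization in Step~3 and the conclusion $\widetilde\omega(r)\to0$ in Step~4 are standard once comparability is in hand.

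The genuine gap is in Step~2, where you ``promote from ratio $\kappa$ to all ratios.'' You assert that for $s/r\in[\kappa,1]$ the iterated almost-increasing condition gives the \emph{two-sided} equivalence $\omega(\kappa^{-i}s)\simeq\omega(\kappa^{-i}r)$ term by term, with constants independent of $i$. That is not a consequence of \eqref{eq4.58}. The almost-increasing condition, iterated, only yields the one-sided bound $\omega(a)\le C(\kappa)\,\omega(b)$ when $a\le b$ and $a/b\ge\kappa$; it gives \emph{no} lower bound on $\omega(a)$ in terms of $\omega(b)$. Concretely, $\omega(r)=e^{-1/r}$ satisfies \eqref{eq4.58} with $C=1$, yet the $i=0$ ratio $T_0(t)/T_0(\kappa t)=e^{(1/\kappa-1)/t}$ blows up as $t\to0$, so the claimed term-by-term equivalence is false. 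Since Step~3 also invokes ``$\widetilde\omega$ is uniformly comparable on each interval $(\kappa^{j+1}r,\kappa^j r)$,'' the gap propagates.

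The fix is to shift the index in one direction of the comparison. For $\kappa t\le s\le t\le1$, the almost-increasing direction gives $\omega(\kappa^{-i}s)\le C(\kappa)\omega(\kappa^{-i}t)$, hence $\widetilde\omega(s)\le C(\kappa)\widetilde\omega(t)$. For the reverse, observe that $\kappa^{-i}t\le\kappa^{-(i+1)}s$ with ratio $\ge\kappa$, so $\omega(\kappa^{-i}t)\le C(\kappa)\,\omega(\kappa^{-(i+1)}s)$ (after extending $\omega$ by the constant $\omega(1)$ on $[1,\infty)$, which preserves \eqref{eq4.58}); thus $T_i(t)\le 2C(\kappa)\,T_{i+1}(s)$ and $\widetilde\omega(t)\le 2C(\kappa)\widetilde\omega(s)$. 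Equivalently, one can note that $\widetilde\omega$ inherits \eqref{eq4.58} (as a sum of functions satisfying it with a uniform constant) and combine this with your scaling identity $\widetilde\omega(t)\le 2\widetilde\omega(\kappa t)$ and $\kappa t\le s$. Either repair closes the gap; as written, the two-sided claim in Step~2 is unjustified.
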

The proof is by direct computation which can be found in \cite[Lemma~1]{D} and \cite[Lemma~2.7]{DK17}.

Clearly, $\rho_f$ given in Definition \ref{def-oscillation} is a non-decreasing function. One can simply check that $\omega_f$ satisfies \eqref{eq4.58} provided that the doubling property is satisfied, i.e., for any $x\in \overline{\Omega}$ and $r>0$, $|\Omega_{2r}(x)|\le C_0 |\Omega_{r}(x)|$ for some constant $C_0>0$.

\section{$L_p$-Mean Oscillation Estimate}
In this section, we focus on the equation without lower-order terms
\begin{equation}\label{eqn-no-lower-order}
\begin{cases}
a_{ij}D_{ij}u = f &\text{in}\,\Omega,\\
\beta_i D_i u = g &\text{on}\,\p\Omega.
\end{cases}
\end{equation}
For any $x\in\overline{\Omega}$ and $p\in(0,1)$, recall the $L_p$-mean oscillation of $D^2u$ in \eqref{eqn-def-Lp-mean-osc}. For simplicity, we also denote
\begin{align*}
&\omega^{(1)}(r):= r\norm{D\beta}_{L_\infty(\Omega)} + \rho_{D\vec{\beta}}(r) + \rho_{D\psi_0}(r) + \omega_{\vec{A}}(r),\\
&\omega^{(2)}(r):= r\norm{Dg}_{L_\infty(\Omega)} + \rho_{Dg}(r) + \omega_f(r).
\end{align*}
Due to our assumptions, they are both Dini functions. The next proposition plays a key role in proving Theorem \ref{thm-main}.
\begin{proposition}\label{prop-mean-oscillation}
Assume that $u\in C^2(\overline{\Omega})$ solves \eqref{eqn-no-lower-order} in a bounded domain $\Omega$. If $\vec{A}$ and $f$ are $L_1$-mean Dini, and $\p\Omega,\vec{\beta},g\in C^{1,\text{Dini}}$, then for any $x\in \overline{\Omega}, p\in (0,1),\kappa \in (0,1)$, and $r\leq R_0$, we have
\begin{align}\label{est-u-mean-Dini}
&\phi(x,\kappa r)\leq C\kappa \phi(x,r) \nonumber\\ &\quad +C(\kappa^{-d/p}+\kappa)\big((\norm{D^2u}_{L_\infty(\Omega_r(x))} + \norm{Du}_{L_\infty(\Omega_r(x))}+\norm{Dg}_{L_\infty(\Omega)})\omega^{(1)}(r) + \omega^{(2)}(r)\big),
\end{align}
where $C=C(d,p,\lambda,\delta,\norm{\beta}_{C^1(\Omega)}, \norm{D\psi_0}_{L_\infty(\Omega)},\rho_{D\psi_0})$.
\end{proposition}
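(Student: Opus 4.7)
The strategy is a Campanato-type iteration: at each scale $r$, freeze the coefficients, flatten the boundary, and compare $u$ to a smooth solution of a constant-coefficient Neumann problem on the half-ball, for which Lemma \ref{lem-C3} gives $C^3$ control; the remainder is controlled by Corollary \ref{cor-strong-p-1}. I split into an interior case, $B_{r/2}(x)\subset\Omega$, which reduces to the interior mean-oscillation iteration of \cite{DK17}, and a boundary case in which some $x_0\in\p\Omega$ lies within distance $r$ of $x$; only the latter is substantive, and I concentrate on $x=x_0$ for simplicity since the general case follows by comparing $\Omega_{\kappa r}(x)$ with $\Omega_{Cr}(x_0)$.

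In the boundary case, switch to the coordinate system of Section \ref{sec-reg-dist} centered at $x_0$ with $y_d$-axis along $\vec{\beta}(x_0)$, and use the regularized distance $\psi$ to define a $C^1$-diffeomorphism $\Phi$ that sends $\p\Omega$ into $\Sigma_{cr}$ and $\Omega_{r}(x_0)$ into $B_r^+$. Because products of $L_1$-mean Dini functions are $L_1$-mean Dini (remark after Definition \ref{def-oscillation}) and $\psi$ obeys \eqref{est-moc-psi-moc-psi0}--\eqref{est-reg-dist-higher-order}, the pushed-forward leading coefficients $\tilde a_{ij}$ remain $L_1$-mean Dini with modulus controlled by $\omega^{(1)}$, and $\tilde{\vec{\beta}}$ stays $C^{1,\text{Dini}}$ with $\tilde{\vec{\beta}}(0)=|\vec{\beta}(x_0)|e_d$, so the boundary condition differs from pure Neumann only by a $C^{1,\text{Dini}}$ perturbation of modulus $\omega^{(1)}$.

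Next, set $\bar a_{ij}:=(\tilde a_{ij})_{B_r^+}$ and pick constants $\bar f,\bar g$ approximating the data at scale $r$. Split $\tilde u = v + w$, where $v\in W^2_2(B_r^+)$ solves $-\bar a_{ij}D_{ij}v=\bar f$ in $B_r^+$ with $D_d v=\bar g$ on $\Sigma_r$ and $v=\tilde u$ on $\sigma_r$. After subtracting a quadratic polynomial to normalize $\bar f$ and $\bar g$, Lemma \ref{lem-C3} gives
\begin{equation*}
\|D^3 v\|_{L_\infty(B_{r/2}^+)}\leq \frac{C}{r}\Big(\fint_{B_r^+}|D^2 v - \vq_{x_0,r}|^p\Big)^{1/p},
\end{equation*}
so that $\inf_{\vq}(\fint_{B_{\kappa r}^+}|D^2 v-\vq|^p)^{1/p}\leq C\kappa\,\phi(x_0,r) + C\kappa\bigl(\fint_{B_r^+}|D^2 w|^p\bigr)^{1/p}$. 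After absorbing the inhomogeneous and oblique boundary corrections into an explicit polynomial, $w$ satisfies an equation with constant operator, homogeneous Neumann data on $\Sigma_r$, and right-hand side essentially $(\bar a_{ij}-\tilde a_{ij})D_{ij}\tilde u + (\tilde f - \bar f)$ plus lower-order error terms from the correction. Corollary \ref{cor-strong-p-1} then bounds $(\fint_{B_r^+}|D^2 w|^p)^{1/p}$ by the $L_1$-average of the right-hand side, which is dominated by $(\|D^2u\|_{L_\infty(\Omega_r(x))}+\|Du\|_{L_\infty(\Omega_r(x))}+\|Dg\|_{L_\infty(\Omega)})\omega^{(1)}(r)+\omega^{(2)}(r)$. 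The factor $\kappa^{-d/p}$ arises when passing from the $L_p$-average on $B_r^+$ to the one on $B_{\kappa r}^+$ in the $w$-contribution, yielding the stated estimate.

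The main obstacle is the bookkeeping for the inhomogeneous oblique boundary condition: Lemma \ref{lem-weak-1-1} and Corollary \ref{cor-strong-p-1} are stated only for $D_d w = 0$, whereas after flattening, the boundary condition reads $\tilde\beta_iD_i\tilde u = \tilde g$ on $\Sigma_r$ with $\tilde{\vec\beta}$ merely $C^{1,\text{Dini}}$-close to $|\vec{\beta}(x_0)|e_d$ and $\tilde g$ only $C^{1,\text{Dini}}$. The polynomial correction must simultaneously kill the constant part of $\tilde g$, the first-order Taylor expansion of $\tilde g-\bar g$ (producing the $\|Dg\|_\infty\omega^{(1)}+\omega^{(2)}$ contribution), and the boundary error $(\tilde{\vec{\beta}}-|\vec\beta(x_0)|e_d)\cdot D\tilde u$ (producing the $\|Du\|_\infty\omega^{(1)}$ contribution), all while remaining a polynomial of bounded degree with second derivatives controlled by the data. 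This is the delicate step that accounts for all the $\omega^{(1)}$- and $\omega^{(2)}$-terms on the right-hand side of \eqref{est-u-mean-Dini}.
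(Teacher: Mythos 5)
Your overall architecture (interior case via \cite{DK17}; boundary case via regularized-distance flattening, coefficient freezing, a ``good part'' controlled by Lemma \ref{lem-C3}, and a ``bad part'' controlled by Corollary \ref{cor-strong-p-1}) matches the paper's, but there is a genuine gap in the step you yourself flag as ``the delicate step.''

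Your decomposition $\tilde u = v + w$ (with $v$ a constant-coefficient solution with constant Neumann data, and $w$ the remainder) leaves $w$ satisfying, on $\Sigma_r$,
\[
D_d w \;=\; D_d\tilde u - \bar g \;=\; \frac{1}{|\vec\beta(x_0)|}\Big(\tilde g - \big(\tilde\beta_i - |\vec\beta(x_0)|\,\delta_{id}\big)D_i\tilde u\Big) - \bar g ,
\]
which contains the term $\big(\tilde\beta_i - |\vec\beta(x_0)|\,\delta_{id}\big)D_i\tilde u$. No quadratic (or any polynomial) correction can kill this, because it involves the unknown $D\tilde u$, which is not a polynomial and whose modulus of continuity is precisely what one is trying to estimate. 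After subtracting the first-order Taylor polynomial of this expression you are left not with a constant but with a genuine function whose smallness involves $\rho_{D\tilde u}$, i.e.\ $\|D^2 u\|_{L_\infty}$ times $r$, plus the Dini remainder of $\tilde\beta$. That remainder lives on $\Sigma_r$ as inhomogeneous Neumann data for $w$, so Lemma \ref{lem-weak-1-1} and Corollary \ref{cor-strong-p-1} (which are stated only for the homogeneous Neumann condition) simply do not apply to your $w$, and there is nothing in your argument to replace them.

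The paper closes this gap by a different device which your proposal does not have: \emph{before} flattening, it introduces a global auxiliary function $w$ (see \eqref{eqn-def-w}) that is itself a $W^2_2$ solution of an oblique derivative problem whose boundary data is exactly the cut-off non-polynomial remainder $-\overline{\vec\beta}\cdot Du - \eta(y\cdot D\vec\beta(0))(Du - Du(0)) + \overline g$. The solvability and the $W^2_2$ estimate for this auxiliary problem come from \cite[Theorem~2.4]{DL} and crucially use that the interpolated boundary operator $\eta\vec\beta(0) + (1-\eta)\vec\beta$ remains uniformly oblique. The polynomial $h$ in \eqref{eqn-def-h} then absorbs only the genuinely affine parts of the boundary data. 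With this extra auxiliary function, $v := u - w - h$ satisfies a pure $D_d v = 0$ condition on $\Gamma_{r/2}$, and only then does one flatten and apply the half-space lemmas. Without invoking a $W^2_2$-solvability result for an oblique derivative problem (or an equivalent mechanism) to absorb the non-polynomial boundary error, your argument cannot reach a homogeneous Neumann condition, and \eqref{est-u-mean-Dini} does not follow.

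A smaller but nontrivial omission: the flattening map pushes the constant matrix $(\tilde a_{ij})_{B^+}$ forward through a non-constant Jacobian, so the transformed second derivatives differ from $D^2_y v$ by terms involving $D\psi - D\psi(0)$ and $D^2\psi$; the paper tracks these via \eqref{est-moc-psi-moc-psi0}--\eqref{est-reg-dist-higher-order}, Hardy's inequality, and the explicit conjugation $\widetilde{\vq} = (\p y/\p z)(0)^T\vq(\p y/\p z)(0)$. Your sketch acknowledges that $\tilde a_{ij}$ is Dini but does not address the $D^2\psi\cdot D_d v$ term, which is singular near $z_d=0$ and is precisely where Hardy's inequality is needed.
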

The rest of this section will be devoted to its proof.
\subsection{Homogeneous case}
We first consider the equation with constant coefficients and homogeneous boundary condition. Let $(\overline{a_{ij}})$ be a constant symmetric matrix with elliptic constant $\lambda$.  For $x\in\p\Omega$, we choose the coordinate system $y=(y_1,\ldots,y_d)$ centered at $x$ as before. Recall that in $\Omega_{R_0}$, we have
$$
\frac{\p \psi}{\p y_d}\geq \delta\abs{D\psi}/2,
$$
i.e., the $y_d$-direction is oblique. For $r\leq R_0$, denote $\Gamma_r:=\p\Omega\cap B_r$.
\begin{lemma}\label{lem-homogeneous}
Assume that $v\in W^2_2(\Omega_r)$ and $f \in L_2(\Omega_r)$ satisfy
\begin{equation}\label{eqn-homogeneous}
\begin{cases}
-\overline{a_{ij}}D_{ij}v = f &\text{in}\,\Omega_r,\\
\frac{\p v}{\p y_d}=0 &\text{on}\,\Gamma_r.
\end{cases}
\end{equation}
Then for each $\kappa \in (0,1)$, we can find some constant matrix $\vec{V}_{\kappa r}\in \bS^{d\times d}$ such that
\begin{equation}\label{est-mean-osc-homogeneous}
\begin{split}
\big(\fint_{\Omega_{\kappa r}} \abs{D^2 v - \vec{V}_{\kappa r}}^p\big)^{1/p} \leq& C \kappa \big(\fint_{\Omega_r} \abs{D^2 v - \vq}^p\big)^{1/p}\\
&+ C(\kappa^{-d/p}+\kappa)\big((\abs{f-(f)_{\Omega_r}})_{\Omega_r} + \rho_{D\psi_0}(r)(\abs{D^2v}^2)_{\Omega_r}^{1/2}\big)
\end{split}
\end{equation}
holds for any $\vq \in \bS^{d\times d}$,
where $C=C(d,p,\lambda,\delta,\norm{D\psi_0}_{L_\infty(\Omega)})$ is a constant.
\end{lemma}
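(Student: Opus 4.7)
The plan is to execute a Campanato-type decomposition in the spirit of \cite{DK17,DEK,DLK}. The two key building blocks are already in place: Corollary~\ref{cor-strong-p-1} will handle the ``rough'' part of $v$ (coming from the oscillation of $f$ and from the error incurred in flattening the boundary), while Lemma~\ref{lem-C3} will handle the ``smooth'' part and is what produces the factor $\kappa$ on the right-hand side. Both ingredients live on a half-ball with constant coefficients and the homogeneous Neumann condition, so the first task is to reduce \eqref{eqn-homogeneous} to that setting.

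Using the regularized distance $\psi$ of Section~\ref{sec-reg-dist} I would construct a $C^{1,\text{Dini}}$ diffeomorphism $y = \Phi(z)$ from a half-ball $B_r^+$ onto a subset of $\Omega_r$ that sends $\Sigma_r$ into $\partial\Omega$. Because the $y_d$-axis is aligned with $\vec\beta(0)$ and \eqref{eqn-def-R0} yields $\psi_{y_d}\gtrsim|D\psi|$, one can arrange $D\Phi = I + O(\rho_{D\psi_0}(r))$, while \eqref{est-reg-dist-higher-order} controls $D^2\Phi$. The pulled back function $\tilde v := v\circ\Phi$ then satisfies
\[
-\overline{a_{ij}}\,D_{ij}\tilde v = f\circ\Phi + E_1 \ \text{in } B_r^+,\qquad D_d\tilde v = E_2 \ \text{on } \Sigma_r,
\]
with $|E_1|+|E_2|\lesssim \rho_{D\psi_0}(r)(|D^2 v|+|Dv|)$.

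Next I would split $\tilde v = w + h$, where $w\in W^2_2(B_r^+)$ solves the ``clean'' auxiliary problem
\[
-\overline{a_{ij}}\,D_{ij} w = (f\circ\Phi)_{B_r^+}\ \text{in}\ B_r^+,\quad D_d w = 0\ \text{on}\ \Sigma_r,\quad w = \tilde v\ \text{on}\ \sigma_r.
\]
The remainder $h$ has oscillation-type right-hand side and inhomogeneous Neumann data $-E_2$; after lifting the latter to an equivalent forcing term, Corollary~\ref{cor-strong-p-1} gives
\[
\bigl(\fint_{B_r^+}|D^2 h|^p\bigr)^{1/p} \le C\fint_{B_r^+}|f\circ\Phi - (f\circ\Phi)_{B_r^+}| + C\rho_{D\psi_0}(r)\bigl(\fint_{B_r^+}|D^2 v|^2\bigr)^{1/2}.
\]
For $w$, the hypotheses of Lemma~\ref{lem-C3} are met exactly (the right-hand side is constant); after rescaling to the unit half-ball and Taylor expanding $D^2 w$ around the origin one obtains, for any constant symmetric $\vec q$,
\[
\bigl(\fint_{B_{\kappa r}^+}|D^2 w - D^2 w(0)|^p\bigr)^{1/p} \le C\kappa\bigl(\fint_{B_r^+}|D^2 w - \vec q|^p\bigr)^{1/p}.
\]

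To finish, I would combine the two displays by the triangle inequality, take $\vec V_{\kappa r}$ to be the constant matrix $D^2 w(0)$ transported back to the $y$-coordinates through $D\Phi$, and pull the resulting bound to $\Omega_{\kappa r}$; the factor $\kappa^{-d/p}$ appears precisely where $L^p$ averages on $B_{\kappa r}^+$ must be dominated by averages on the larger $B_r^+$, and \eqref{est-nbhd-equiv-ball} together with $|\det D\Phi - 1|\lesssim \rho_{D\psi_0}(r)$ ensures that the average on $\Omega_{\kappa r}$ is comparable to the one on $\Phi^{-1}(\Omega_{\kappa r})$. The main obstacle I expect is in the flattening step itself: the pulled back boundary condition is only \emph{approximately} Neumann, with error of size $\rho_{D\psi_0}(r)|Dv|$, and one has to arrange that this error feeds into the $\rho_{D\psi_0}(r)$ term of \eqref{est-mean-osc-homogeneous}---which carries no $\kappa$-gain---rather than polluting the leading $C\kappa$ term. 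The sharp bounds \eqref{est-moc-psi-moc-psi0}--\eqref{est-reg-dist-higher-order} on $\psi$ are exactly what make this bookkeeping possible, and they also account for the somewhat crude $(|D^2 v|^2)^{1/2}_{\Omega_r}$ factor retained on the right-hand side of \eqref{est-mean-osc-homogeneous}.
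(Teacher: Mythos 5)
Your high-level strategy — flatten via the regularized distance, decompose into a constant-coefficient/constant-RHS piece controlled by Lemma~\ref{lem-C3} and an oscillation piece controlled by Corollary~\ref{cor-strong-p-1}, then pull back — is exactly the paper's. But two of the technical claims in your flattening step are not right, and one of them conceals the central difficulty of the lemma. You assert that one can arrange $D\Phi = I + O(\rho_{D\psi_0}(r))$ and that the pulled-back boundary condition is only approximately Neumann with an error $E_2 = O(\rho_{D\psi_0}(r)|Dv|)$. Neither holds in general. Since $\vec\beta$ is merely oblique, $\partial\Omega$ near the origin is close to the hyperplane orthogonal to $D\psi(0)$, which is \emph{not} $\{y_d=0\}$, so any diffeomorphism mapping $\{z_d=0\}$ to $\partial\Omega$ has Jacobian bounded away from the identity by the obliqueness angle, an $O(1)$ quantity. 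The paper sidesteps the issue of an inhomogeneous $E_2$ entirely by the specific choice $z'=y'$, $z_d=\psi(y)$, for which the chain rule gives $\partial v/\partial y_d = \psi_{y_d}\,\partial v/\partial z_d$; since $\psi_{y_d}\ge\delta|D\psi|/2>0$ on $\Omega_{R_0}$, the condition $\partial v/\partial y_d=0$ on $\Gamma_r$ transforms to \emph{exactly} $\partial v/\partial z_d=0$ on $\{z_d=0\}$ — nothing needs to be lifted. If you instead insist on a map where the Neumann data becomes inhomogeneous, the lifting step you gesture at (which would require control of tangential derivatives of $E_2$, hence of $D^3 v$) is a genuine obstacle your sketch does not resolve.

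The more serious gap is the pointwise estimate $|E_1|\lesssim \rho_{D\psi_0}(r)(|D^2 v|+|Dv|)$, which is false: by \eqref{est-reg-dist-higher-order} the second derivatives of the regularized distance blow up like $\psi^{-1}\rho_{D\psi_0}(\psi)$ near $\partial\Omega$, so the term $\overline{a_{ij}}D_{ij}\psi\,\partial_{z_d}v$ produced by the change of variables is not pointwise of order $\rho_{D\psi_0}(r)|Dv|$. This cannot be fixed by picking a better map, because a $C^{1,\text{Dini}}$ boundary admits no flattening with bounded second derivatives. The paper controls this term with Hardy's inequality, leveraging precisely the transformed Neumann condition $\partial_{z_d}v=0$ on $\{z_d=0\}$ to obtain $\norm{z_d^{-1}\partial_{z_d}v}_{L_2}\lesssim\norm{D^2_z v}_{L_2}$, which combined with \eqref{est-reg-dist-higher-order} yields the $L_2$ bound $\rho_{D\psi_0}(r)(\abs{D^2 v}^2)_{\Omega_r}^{1/2}$ appearing in \eqref{est-mean-osc-homogeneous}. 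You write this conclusion down but supply no mechanism for it; without Hardy's inequality (or an equivalent device exploiting the homogeneous Neumann data), the input to Corollary~\ref{cor-strong-p-1} is not in $L_2$ at all. This step is precisely what lets the lemma work with a $C^{1,\text{Dini}}$ boundary rather than $C^{2,\text{Dini}}$, and it is the part your proposal is missing.
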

\begin{proof}
Clearly we only need to prove \eqref{est-mean-osc-homogeneous} for small $\kappa$. Here we consider $\kappa<\delta/(4M)$ where $M:=\max\set{c\delta, 2c/\inf_{\p\Omega}\abs{D\psi}}$ with $c$ given as in \eqref{est-moc-psi-moc-psi0}.

In $\Omega_r$, we flatten the boundary by taking the change of variables
\begin{equation}\label{eqn-cov}
y\in \Omega_r\cup\Gamma_r \rightarrow z\in \overline{\bR^d_{+}},\quad z_i(y) = y_i,\,\,i<d,\quad z_d(y)=\psi(y).
\end{equation}
In the $z$-variables, \eqref{eqn-homogeneous} becomes
\begin{equation*}
\begin{cases}
-\overline{a_{ij}}\frac{\p z_k}{\p y_i}\frac{\p z_l}{\p y_j}\frac{\p^2}{\p z_k \p z_l}v = f + \overline{a_{ij}}\frac{\p^2 \psi}{\p y_i \p y_j}\frac{\p v}{\p z_d} &\text{in}\,\,z(\Omega_r)\subset\bR^d_{+},\\
\frac{\p v}{\p z_d}=0 &\text{on}\,\,z(\Gamma_r)\subset\set{z_d=0}.
\end{cases}
\end{equation*}
In the sequel, we denote
$$\widetilde{\vec{A}} = (\widetilde{a_{kl}})_{k,l}:=\big(\overline{a_{ij}}\frac{\p z_k}{\p y_i}\frac{\p z_l}{\p y_j}\big)_{k,l}.$$
Due to \eqref{est-moc-psi-moc-psi0} and our choice of $M$, $\widetilde{\vec{A}}$ is uniform Dini with
\begin{equation}\label{est-moc-A-tilde}
\rho_{\widetilde{\vec{A}}}\big(\frac{\delta}{M}r\big) \leq C(d, \lambda,\norm{D\psi}_{L_\infty(\Omega)})\rho_{D\psi_0}(r).
\end{equation}
Since $\frac{\p \psi}{\p y_d}\geq \frac{\delta}{2}\abs{D\psi}$ in $\Omega_r$, for any two points $z^{(1)}, z^{(2)} \in z(\Omega_r)$ we have
\begin{align*}
\abs{z^{(1)}-z^{(2)}}^2 &= \sum_{i<d}\abs{y^{(1)}_i-y^{(2)}_i}^2 + \abs{\psi(y^{(1)})-\psi(y^{(2)})}^2\\
&\geq \sum_{i<d}\abs{y^{(1)}_i-y^{(2)}_i}^2 + \big(\frac{\delta}{2}\abs{D\psi}\big)^2\abs{y^{(1)}_d - y^{(2)}_d}^2\\
&\geq \big(\frac{\delta}{2}\abs{D\psi}\big)^2\abs{y^{(1)}-y^{(2)}}^2 \geq \big(\frac{\delta c}{M}\big)^2\abs{y^{(1)}-y^{(2)}}^2,
\end{align*}
and similarly
\begin{equation*}
\abs{z^{(1)}-z^{(2)}}^2 \leq 2 \abs{y^{(1)}-y^{(2)}}^2.
\end{equation*}
From these and $\kappa<\delta/(4M)$, we know
\begin{equation}\label{eqn-inclusion}
z(\Omega_{\kappa r}) \subset B^{+}_{2 \kappa r} \subset B^{+}_{(\delta/2M)r} \subset B^{+}_{(\delta/M)r} \subset z(\Omega_{r/c})\subset z(\Omega_r).
\end{equation}
In $B^{+}_{(\delta/M)r}$, we decompose $v=\theta+\xi$, where $\theta\in W^2_2(B^{+}_{(\delta/M)r})$ is a strong solution to
\begin{equation}\label{eqn-theta}
\begin{cases}
-(\widetilde{a_{kl}})_{B^{+}_{(\delta/M)r}}\frac{\p^2}{\p z_k \p z_l}\theta&= (\widetilde{a_{kl}}- (\widetilde{a_{kl}})_{B^{+}_{(\delta/M)r}})\frac{\p^2}{\p z_k \p z_l}v + \overline{a_{ij}}\frac{\p^2 \psi}{\p y_i \p y_j}\frac{\p v}{\p z_d} + f-(f)_{\Omega_r} \,\,\text{in}\,B^{+}_{(\delta/M)r},\\
\frac{\p \theta}{\p z_d} &= 0 \,\,\text{on}\,\Sigma_{(\delta/M)r},\\
\theta &= 0 \,\,\text{on}\,\sigma_{(\delta/M)r}.
\end{cases}
\end{equation}
Recall that the notation $(f)_{\Omega_r}$ stands for the average. Due to \eqref{est-reg-dist-higher-order}, $\frac{\p v}{\p z_d}=0$ on $z_d=0$, and Hardy's inequality, we have
\begin{equation*}
\Abs{\overline{a_{ij}}\frac{\p^2 \psi}{\p y_i \p y_j}\frac{\p v}{\p z_d}}\leq C\rho_{D\psi_0}(r)\frac{1}{z_d}\Abs{\frac{\p v}{\p z_d}}\in L_2(B_{(\delta/M)r}^{+}),
\end{equation*}
with
\begin{equation}\label{est-use-hardy}
\Norm{\overline{a_{ij}}\frac{\p^2 \psi}{\p y_i \p y_j}\frac{\p v}{\p z_d}}_{L_2(B_{(\delta/M)r}^{+})} \leq C\rho_{D\psi_0}(r) \norm{D^2_{z}v}_{L_2(B_{(\delta/M)r}^{+})}.
\end{equation}
Such solution $\theta \in W^2_2(B_{(\delta/M)r}^{+})$ exists. Indeed, we first reduce \eqref{eqn-theta} to a Dirichlet problem in $B_{(\delta/M)r}$ by taking the even extension in $z_d$ for $\theta$ and the source term, and the following extension for the leading coefficients
\begin{equation*}
\hat{a}_{kl}=\begin{cases}
(\widetilde{a_{kl}})_{B^{+}_{(\delta/M)r}}\,\,z_d\geq 0,\\
\epsi_k\epsi_l(\widetilde{a_{kl}})_{B^{+}_{(\delta/M)r}}\,\,z_d<0,
\end{cases}\,\,\text{where}\, \epsi_k=\begin{cases}
+1\,\,\text{when}\,\,k\neq d,\\
-1\,\,\text{when}\,\,k=d.
\end{cases}
\end{equation*}
Note that the extended problem has measurable coefficients only depending on $z_d$, which are also continuous (actually equal to constants) near $z_d=\pm 1$. Then the $W^2_2$ solvability follows from \cite[Theorem~2.8]{DK10}. See also the example on \cite[pp.~6483]{DK10}.

Using Corollary \ref{cor-strong-p-1}, \eqref{est-use-hardy}, and H\"older's inequality, we obtain
\begin{equation}\label{est-theta}
\begin{split}
(\abs{D^2\theta}^p)_{B^{+}_{(\delta/M)r}}^{1/p} &\lesssim \rho_{\widetilde{\vec{A}}}\big(\frac{\delta}{M}r\big)(\abs{D^2_z v}^2)_{B^{+}_{(\delta/M)r}}^{1/2} + \rho_{D\psi_0}(r) (\abs{D^2_{z}v}^2)_{B_{(\delta/M)r}^{+}}^{1/2}+ (\abs{f-(f)_{\Omega_r}})_{B^{+}_{(\delta/M)r}}\\
&\lesssim \rho_{D\psi_0}(r) (\abs{D^2_{z}v}^2)_{B_{(\delta/M)r}^{+}}^{1/2}+ (\abs{f-(f)_{\Omega_r}})_{B^{+}_{(\delta/M)r}},
\end{split}
\end{equation}
where in the last inequality, we used \eqref{est-moc-A-tilde}. Here the implicit constant depends on $d$, $p$, $\lambda$, and $\norm{D\psi_0}_{L_\infty(\Omega)}$.

Now $\xi: = v-\theta \in W^2_2(B_{(\delta/M)r}^{+})$ satisfies
\begin{equation*}
\begin{cases}
-(\widetilde{a_{kl}})_{B^{+}_{(\delta/M)r}}\frac{\p^2}{\p z_k \p z_l}\xi = (f)_{\Omega_r} &\text{in}\,B^{+}_{(\delta/M)r},\\
\frac{\p\xi}{\p z_d}=0 &\text{on}\,\Sigma_{(\delta/M)r}.
\end{cases}
\end{equation*}
Noting \eqref{eqn-inclusion}, a rescaled version of Lemma \ref{lem-C3} leads to
\begin{equation*}
\begin{split}
&\big(\fint_{B_{2\kappa r}^{+}} \abs{D^2\xi - (D^2\xi)_{B_{2\kappa r}^{+}}}^p\big)^{1/p} \leq C(d)\kappa r \norm{D^3\xi}_{L_\infty(B_{2\kappa r}^{+})} \\
&\leq C(d)\kappa r \norm{D^3\xi}_{L_\infty(B_{(\delta/2M) r}^{+})}
\leq C(d,p,\lambda)\kappa (\abs{D^2\xi - \widetilde{\vec{q}}}^p)^{1/p}_{B_{(\delta/M)r}^{+}}
\end{split}
\end{equation*}
with $\widetilde{\vec{q}}\in\bS^{d\times d}$ to be chosen later.

Combining this and \eqref{est-theta}, we obtain
\begin{align}
&\big(\fint_{B_{2\kappa r}^{+}} \abs{D^2_zv - (D^2\xi)_{B^+_{2\kappa r}}}^p\big)^{1/p}\nonumber\\
&\leq 2^{1/p-1}\big(\big(\fint_{B_{2\kappa r}^{+}} \abs{D^2 \xi - (D^2\xi)_{B^+_{2\kappa r}}}^p\big)^{1/p} + \big(\fint_{B_{2\kappa r}^{+}} \abs{D^2_z\theta}^p\big)^{1/p}\big)\nonumber\\
&\lesssim \kappa (\abs{D^2\xi - \widetilde{\vq}}^p)^{1/p}_{B_{(\delta/M)r}^{+}} + \big(\fint_{B_{2\kappa r}^{+}} \abs{D^2_z\theta}^p\big)^{1/p}\nonumber\\
&\lesssim \kappa (\abs{D^2_z v - \widetilde{\vq}}^p)^{1/p}_{B_{(\delta/M)r}^{+}} + (\kappa^{-d/p}+\kappa)\big(\fint_{B_{(\delta/M) r}^{+}} \abs{D^2_z\theta}^p\big)^{1/p}\nonumber\\
&\lesssim \kappa (\abs{D^2_z v - \widetilde{\vq}}^p)^{1/p}_{B_{(\delta/M)r}^{+}} + (\kappa^{-d/p}+\kappa)\big(\rho_{D\psi_0}(r) (\abs{D^2_{z}v}^2)_{B_{(\delta/M)r}^{+}}^{1/2}+ (\abs{f-(f)_{\Omega_r}})_{B^{+}_{(\delta/M)r}}\big)\label{eqn-03292024}
\end{align}
with the constant depending on $(d,p,\lambda,\norm{D\psi_0}_{L_\infty(\Omega)})$. Now we translate back to the $y$-coordinates. Combining
\begin{align*}
&dz = \Abs{\frac{\p\psi}{\p y_d}}dy,\quad \frac{\delta}{2}\abs{D\psi}\leq \Abs{\frac{\p\psi}{\p y_d}} \leq 1\,\, \forall y\in\Omega_{r}, \\
&\frac{\p^2 v}{\p y_i \p y_j} = \frac{\p^2 v}{\p z_k \p z_l}\frac{\p z_k}{\p y_i}\frac{\p z_l}{\p y_j} + \frac{\p v}{\p z_d}\frac{\p^2 \psi}{\p y_i \p y_j}
\end{align*}
together with \eqref{eqn-inclusion}, Hardy's inequality, and H\"older's inequality, we can continue the computation from \eqref{eqn-03292024}:
\begin{equation}\label{est-v-1}
\begin{split}
&\big(\fint_{\Omega_{\kappa r}} \abs{(\frac{\p y}{\p z})^T D^2_y v \frac{\p y}{\p z}  - (D^2\xi)_{B^+_{2\kappa r}}}^p\,dy\big)^{1/p}\\
&\lesssim \big(\fint_{B_{2\kappa r}^{+}} \abs{D^2_zv - (D^2\xi)_{B^+_{2\kappa r}}}^p\,dz\big)^{1/p} + \rho_{D\psi_0}(r) (\abs{D^2_{z}v}^2)_{B_{2\kappa r}^{+}}^{1/2}\\
&\lesssim \kappa (\abs{D^2_z v - \widetilde{\vq}}^p)^{1/p}_{B_{(\delta/M)r}^{+}} + (\kappa^{-d/p}+\kappa)\big(\rho_{D\psi_0}(r) (\abs{D^2_{z}v}^2)_{B_{(\delta/M)r}^{+}}^{1/2}+ (\abs{f-(f)_{\Omega_r}})_{B^{+}_{(\delta/M)r}}\big)\\
&\lesssim \kappa\big(\fint_{\Omega_{r/c}} \abs{(\frac{\p y}{\p z})^T D^2_y v \frac{\p y}{\p z} - \widetilde{\vq}}^p\,dy\big)^{1/p} + (\kappa^{-d/p}+\kappa)\big(\rho_{D\psi_0}(r) (\abs{D^2_{y}v}^2)_{\Omega_r}^{1/2} + (\abs{f-(f)_{\Omega_r}})_{\Omega_r}\big).
\end{split}
\end{equation}
Here besides $(d,p,\lambda,\norm{D\psi_0}_{L_\infty(\Omega)})$, the constant also depends on $\delta$. This is almost \eqref{est-mean-osc-homogeneous}, except that we also need to deal with $\p z/\p y$ coming from the change of variables. By \eqref{est-moc-psi-moc-psi0} and the (generalized) triangular inequality,
\begin{equation*}
\begin{split}
&\big(\fint_{\Omega_{r/c}} \abs{(\frac{\p y}{\p z})^T D^2_y v \frac{\p y}{\p z} - \widetilde{\vq}}^p\,dy\big)^{1/p} \\
&\leq 3^{1/p-1}\big(2 \rho_{D\psi}(r/c) \norm{\frac{\p y}{\p z}}_{L_\infty(\Omega_{r/c})}(\abs{D^2_y v}^2)^{1/2}_{\Omega_{r/c}}\\
&\quad + \norm{\frac{\p y}{\p z}}^2_{L_\infty(\Omega_{r/c})}\big(\fint_{\Omega_{r/c}} \Abs{D^2_y v - \big(\frac{\p z}{\p y}(0)\big)^T \widetilde{\vq} \frac{\p z}{\p y}(0)
}^p\,dy\big)^{1/p}\big)\\
&\leq C(d,p,\delta,\norm{D\psi_0}_{L_\infty(\Omega)}) \big(\rho_{D\psi_0}(r)(\abs{D^2_y v}^2)^{1/2}_{\Omega_r} + \big(\fint_{\Omega_{r}} \Abs{D^2_y v - \vq}^p\,dy\big)^{1/p}\big),
\end{split}
\end{equation*}
if we take
\begin{equation*}
\widetilde{\vq}=\big(\frac{\p y}{\p z}(0)\big)^T \vq \frac{\p y}{\p z}(0).
\end{equation*}

If we also take \eqref{eqn-def-R0} into consideration, similar computation leads to
\begin{equation}\label{est-v-3}
\begin{split}
&\big(\fint_{\Omega_{\kappa r}} \abs{\big(\frac{\p y}{\p z}\big)^T D^2_y v \frac{\p y}{\p z} - (D^2\xi)_{B^+_{2\kappa r}}}^p\,dy\big)^{1/p} \\
&\geq C(d,p,\delta,\norm{D\psi}_{L_\infty(\Omega)}) \Big(\big(\fint_{\Omega_{\kappa r}} \Abs{D^2_y v - \vec{V}_{\kappa r}}^p\,dy\big)^{1/p} -\kappa^{-d/2}\rho_{D\psi_0}(r)(\abs{D^2_y v}^2)^{1/2}_{\Omega_r}\Big),
\end{split}
\end{equation}
where
\begin{equation*}
\vec{V}_{\kappa r}=\big(\frac{\p z}{\p y}(0)\big)^T(D^2\xi)_{B^+_{2\kappa r}}\frac{\p z}{\p y}(0).
\end{equation*}
Now combining \eqref{est-v-1}-\eqref{est-v-3}, we immediately obtain \eqref{est-mean-osc-homogeneous}.
\end{proof}

\subsection{Mean oscillation estimate for $D^2u$}
\begin{proof}[Proof of Proposition \ref{prop-mean-oscillation}]
First, for $x$ satisfying $B_r(x)\subset\Omega$, in which case only the interior estimates are concerned, the decay of $L_p$-mean oscillation can be found in \cite[pp.~427]{DK17}. Actually we have
\begin{equation*}
\phi(x,\kappa r)\leq N\kappa \phi(x,r) + N(\kappa^{-d/p}+\kappa)(\norm{D^2u}_{L_\infty(\Omega_r(x))} + \omega_f(r)).
\end{equation*}
By a standard argument, it suffices to consider the case when $x\in\p\Omega$. Choose the coordinate system $y$ centered at $x$ as in Section \ref{sec-reg-dist}. We now reduce the original problem to the homogeneous case \eqref{eqn-homogeneous}. For this, we introduce two auxiliary functions. Let $w\in W^2_2(\Omega)$ be the strong solution to
\begin{equation}\label{eqn-def-w}
\begin{cases}
-\Delta w + w = 0 &\text{in}\,\Omega,\\
\eta\vec\beta(0)\cdot Dw + (1-\eta)\vec\beta\cdot Dw = -\overline{\vec{\beta}}\cdot Du - \eta(y\cdot D\vec{\beta}(0))(Du - Du(0)) + \overline{g} &\text{on}\,\p\Omega.
\end{cases}
\end{equation}
where
\begin{equation*}
\overline{\vec{\beta}}:=\eta(\vec{\beta} - \vec{\beta}(0)- (y\cdot D)\vec{\beta}(0)),\quad \overline{g}:= \eta (g- g(0)-(y\cdot D)g(0)).
\end{equation*}
In the above, $\eta\in C^\infty_c(B_r)$ is taken to be a usual cut-off function satisfying $\eta=1$ on $B_{r/2}$ and $\abs{D\eta}\lesssim1/r$. Due to our previous choice of $R_0$, we know the boundary condition is uniform oblique because
\begin{equation*}
\eta \vec{\beta}(0)\cdot D\psi + (1-\eta)\vec{\beta}\cdot D\psi \geq \eta\frac{\delta}{M}|\vec\beta(0)| + (1-\eta)\delta\abs{\vec{\beta}}\geq \delta \min\set{|\vec\beta(0)|/M,\abs{\vec{\beta}}}.
\end{equation*}
According to \cite[Theorem~2.4]{DL}, such $w$ exists and it satisfies
\begin{equation*}
\norm{w}_{W^2_2(\Omega)} \leq C\big(\norm{\overline{\vec{\beta}}\cdot Du}_{W^1_2(\Omega)} + \norm{\eta(y\cdot D\vec{\beta}(0))(Du - Du(0))}_{W^1_2(\Omega)} + \norm{\overline{g}}_{W^1_2(\Omega)}\big).
\end{equation*}
From this, we obtain
\begin{align}\label{est-w}
&(\abs{D^2w}^2)^{1/2}_{\Omega_{r/2}} \nonumber\\
&\leq C\big((r\norm{D\beta}_{L_\infty}+\rho_{D\beta}(r))(\norm{D^2u}_{L_\infty(\Omega_r)} + \norm{Du}_{L_\infty(\Omega_r)}) + r\norm{Dg}_{L_\infty} + \rho_{Dg}(r)\big),
\end{align}
where $C$ is a constant depending on $(d,\delta,\norm{\beta}_{C^1}, \norm{D\psi_0}_{L_\infty(\Omega)},\rho_{D\psi_0})$.

Next, we consider the parabola
\begin{equation}\label{eqn-def-h}
h:=-\big(y\cdot D\vec{\beta}(0)\big)Du(0)y_d  + g(0)y_d + y\cdot Dg(0) y_d + \frac{y_d^2 (D_d\vec{\beta}(0))\cdot Du(0) -  y_d^2 D_d g(0)}{2},
\end{equation}
which satisfies
\begin{equation*}
\frac{\p h}{\p y_d}= -\big(y\cdot D\vec{\beta}(0)\big) Du(0) + g(0) + y\cdot Dg(0).
\end{equation*}
From our construction, $v:=u-w-h$ satisfies
\begin{equation*}
\begin{cases}
-(a_{ij})_{\Omega_{r/2}}D_{ij}v = \widetilde{f}:= -((a_{ij})_{\Omega_{r/2}}-a_{ij})D_{ij}u + f + (a_{ij})_{\Omega_{r/2}}(D_{ij}h + D_{ij}w) \quad \text{in }\Omega_{r/2},\\
\frac{\p v}{\p y_d}=0\quad \text{on }\Gamma_{r/2}.
\end{cases}
\end{equation*}
Clearly $\widetilde{f}\in L_2(\Omega_{r/2})$. By the triangular inequality and H\"older's inequality,
\begin{equation}\label{est-f-tilde}
(\abs{\widetilde{f}-(\widetilde{f})_{\Omega_{r/2}}})_{\Omega_{r/2}}
\lesssim \omega_{\vec{A}}(r/2) \norm{D^2u}_{L_\infty(\Omega_{r/2})} + \omega_f(r/2) + (\abs{D^2w}^2)^{1/2}_{\Omega_{r/2}}.
\end{equation}
Now we apply Lemma \ref{lem-homogeneous} with $\kappa$, $r$, and $f$ replaced by $2\kappa$, $r/2$, and $\widetilde{f}$ to obtain that there exists some $\vec{V}_{\kappa r}\in \bS^{d\times d}$ such that for any $\vq\in \bS^{d\times d}$,
\begin{equation*}
\begin{split}
\big(\fint_{\Omega_{\kappa r}} \abs{D^2v - \vec{V}_{\kappa r}}^p\big)^{1/p} \leq& C\big(\kappa \big(\fint_{\Omega_{r/2}}\abs{D^2v - \vq}^p\big)^{1/p}\\
&+ (\kappa^{-d/p}+\kappa)((\abs{\widetilde{f}-(\widetilde{f})_{\Omega_{r/2}}})_{\Omega_{r/2}} + \rho_{D\psi_0}(r/2)(\abs{D^2v}^2)^{1/2}_{\Omega_{r/2}})\big).
\end{split}
\end{equation*}
Noting $u=v+w+h$, we can further estimate the mean oscillation of $D^2u$ by
\begin{equation}\label{est-u-mean-osc-mid}
\begin{split}
&\big(\fint_{\Omega_{\kappa r}} \abs{D^2u - (\vec{V}_{\kappa r}+D^2h)}^p\big)^{1/p}\\
&\leq C\kappa \big(\fint_{\Omega_{r/2}}\abs{D^2u - (\vq + D^2h)}^p\big)^{1/p} + C(\kappa^{-d/p}+\kappa)\big((\abs{\widetilde{f}-(\widetilde{f})_{\Omega_{r/2}}})_{\Omega_{r/2}}
+ (\abs{D^2w}^2)_{\Omega_{r/2}}^{1/2} \\
&\quad+ \rho_{D\psi_0}(r/2)(\norm{D^2u}_{L_\infty(\Omega_{r/2})}+\norm{D\vec{\beta}}_{L_\infty(\Omega)}\norm{Du}_{L_\infty(\Omega_{r/2})} +\norm{Dg}_{L_\infty(\Omega)})\big)
\end{split}
\end{equation}
by applying the (generalized) triangular inequality and H\"older's inequality.
Here, we also used the fact that $D^2h\in \bS^{d\times d}$ is a constant matrix and the inequality
\begin{equation*}
\abs{D^2h} \leq C\norm{D\vec{\beta}}_{L_\infty(\Omega)}\norm{Du}_{L_\infty(\Omega_{r/2})} + \norm{Dg}_{L_\infty(\Omega)}.
\end{equation*}
Now substituting the $w$ and $\widetilde{f}$ terms in \eqref{est-u-mean-osc-mid} by the corresponding estimates \eqref{est-w} and \eqref{est-f-tilde}, taking infimum in $\vq$, we obtain \eqref{est-u-mean-Dini}. The proposition is proved.
\end{proof}
\section{Proof of Theorem \ref{thm-main}}
With all these preparations, we are ready to give the proof of our main results. To begin with, we make some reductions. Rewrite \eqref{eqn-main-with-lower-order} as
\begin{equation*}
\begin{cases}
-a_{ij}D_{ij}u + u = f - b_i D_i u - (c-1)u &\text{in}\,\Omega,\\
\beta_iD_i u = g - \beta_0 u &\text{on}\,\p\Omega.
\end{cases}
\end{equation*}
From \cite{S}, $\int_0^1\omega_f(r)/r\,dr<\infty$ implies that $f$ is uniformly continuous, hence, is bounded. Also, by the Sobolev embedding, we have
$$
b_iD_i u + (c-1)u,\,\beta_0 u \in L_{2^{*}}(\Omega),
$$
where
\begin{equation*}
2^{*} = \begin{cases}
2d/(d-2) &\text{when}\,d>2,\\
d+1 &\text{when}\,d\leq 2.
\end{cases}
\end{equation*}
Using the uniqueness of $W^2_p$ solutions in \cite{DL}, we have $u \in W^2_{2^{*}}$. Repeating if needed, in finite steps we obtain that
$$u\in  W^2_{d+1} \subset C^{1,1/(d+1)}.$$
Since the coefficients $\vec{b}$ and $c$ have $L_1$-mean Dini oscillations, and $\beta_0 \in C^{1,\text{Dini}}$, we can deduce that $b_iD_i u$ and $cu$ are of $L_1$-mean Dini, and $\beta_0 u \in C^{1,\text{Dini}}$. Now, by moving all the lower-order terms to the right-hand side we only need to consider the equation \eqref{eqn-no-lower-order}. Also, due to the approximation given at the end of this section, we only need to prove an a priori estimate. In other words, we will estimate the modulus of continuity of $D^2u$ assuming that $u \in C^2(\overline{\Omega})$. Under all these reductions, Proposition \ref{prop-mean-oscillation} applies.

First we will derive the decay of $L_p$-mean oscillation $\phi(x,r)$ from \eqref{est-u-mean-Dini}. From now on, we fix some $p\in(0,1)$ and then choose $\kappa\in (0,1)$ small enough such that $C\kappa <1/2$ in \eqref{est-u-mean-Dini} to get, for any $r\leq R_0$,
\begin{equation*}
\phi(x,\kappa r)
\leq \frac{1}{2}\phi(x,r) + C(\kappa)\big((\norm{D^2u}_{L_\infty(\Omega)} + \norm{Du}_{L_\infty(\Omega)}+\norm{Dg}_{L_\infty(\Omega)})\omega^{(1)}(r) + \omega^{(2)}(r)\big).
\end{equation*}
Applying this $j$ times, we have
\begin{align}\label{est-mean-osc-j-decay}
&\phi(x,\kappa^j r) \leq (1/2)^j \phi(x,r) \nonumber\\
&\quad+ C\big((\norm{D^2u}_{L_\infty(\Omega)} + \norm{Du}_{L_\infty(\Omega)}+\norm{Dg}_{L_\infty(\Omega)})\widetilde{\omega}^{(1)}(\kappa^{j-1}r) + \widetilde{\omega}^{(2)}(\kappa^{j-1}r)\big),
\end{align}
where $\widetilde{\omega}^{1}$ and $\widetilde{\omega}^{(2)}$ are Dini functions derived from $\omega^{(1)}$ and $\omega^{(2)}$ as in \eqref{eqn-derived-Dini-function}.

Second, we estimate $\norm{D^2u}_{L_\infty(\Omega)}$. For any point $x\in\overline{\Omega}$, we take $\vq_{x, r}$ as a minimizer in $\phi(x,r)$, which exists as explained in Section \ref{sec-mosc-minimizer}. By the triangular inequality,
\begin{equation*}
\abs{\vq_{x,\kappa^{j+1} r} - \vq_{x, \kappa^j r}}^p \leq \abs{\vq_{x,\kappa^{j+1} r} - D^2u(z)}^p + \abs{D^2u(z) - \vq_{x,\kappa^{j} r}}^p
\end{equation*}
holds for any integer $j\geq 0$.
Taking the average for $z\in \Omega_{\kappa^{j+1} r}(x)$, we obtain
\begin{equation*}
\abs{\vq_{x,\kappa^{j+1} r} - \vq_{x,\kappa^j r}} \leq 2^{1/p-1} \big(\phi(x,\kappa^{j+1} r) + C(d)\kappa^{-d/p}\phi(x,\kappa^j r)\big).
\end{equation*}
Now taking summation in $j$, and using the property
$$\vq_{x,\kappa^j r} \rightarrow D^2u(x) \quad \text{as }j \rightarrow \infty$$
as noted in Section \ref{sec-mosc-minimizer}, we have
\begin{equation*}
\begin{split}
&\abs{D^2u(x)-\vq_{x,r}} \leq \sum_{j=0}^\infty \abs{\vq_{x,\kappa^{j+1} r} - \vq_{x,\kappa^j r}}\leq C(d,p,\kappa) \sum_{j=0}^{\infty} \phi(x,\kappa^j r)\\
&\lesssim C_1(d,p,\kappa)\phi(x,r) + C_2\big((\norm{D^2u}_{L_\infty(\Omega)} + \norm{Du}_{L_\infty(\Omega)}+\norm{Dg}_{L_\infty(\Omega)})I\widetilde{\omega}^{(1)}(r) + I\widetilde{\omega}^{(2)}(r)\big),
\end{split}
\end{equation*}
where $C_2=C_2(d,p,\lambda,\delta,\norm{\beta}_{C^1}, \norm{D\psi_0}_{L_\infty(\Omega)},\rho_{D\psi_0},\kappa)$. The last inequality follows from \eqref{est-mean-osc-j-decay} and Lemma \ref{lem-induced-Dini-function}, noting that both $\omega^{(1)}$ and $\omega^{(2)}$ satisfy \eqref{eq4.58}. Using the interpolation inequality, we can further deduce that
\begin{align}\label{est-est-diff-u-average-final}
&\abs{D^2u(x)-\vq_{x,r}}\nonumber\\
&\leq C_1\phi(x,r) + C_2\big((\norm{D^2u}_{L_\infty(\Omega)} + \norm{Du}_{L_2(\Omega)}+\norm{Dg}_{L_\infty(\Omega)})I\widetilde{\omega}^{(1)}(r) + I\widetilde{\omega}^{(2)}(r)\big).
\end{align}
By the definition of $\phi(x,r)$, H\"older's inequality, and \eqref{est-nbhd-equiv-ball}, we get
\begin{equation}\label{est-phi-r}
\phi(x,r) \leq \big(\fint_{\Omega_r(x)}\abs{D^2u}^p\big)^{1/p} \leq \big(\fint_{\Omega_r(x)}\abs{D^2u}^2\big)^{1/2} \lesssim r^{-d/2} \norm{D^2 u}_{L_2(\Omega)}.
\end{equation}
Similarly,
\begin{equation}\label{est-vq}
\abs{\vq_{x,r}} \leq 2^{1/p-1} \big(\phi(x,r) + \big(\fint_{\Omega_r(x)}\abs{D^2u}^p\big)^{1/p}\big) \lesssim r^{-d/2} \norm{D^2 u}_{L_2(\Omega)},
\end{equation}
where the first inequality follows by taking the average for $z\in \Omega_r(x)$ on both sides of
\begin{equation*}
\abs{\vq_{x,r}}^p \leq \abs{\vq_{x,r} - D^2u(z)}^p + \abs{D^2u(z)}^p.
\end{equation*}
Using the triangular inequality, \eqref{est-phi-r}, and \eqref{est-vq}, we can derive from \eqref{est-est-diff-u-average-final} that
\begin{align*}
&\abs{D^2u(x)} \leq Cr^{-d/2}\norm{D^2u}_{L_2(\Omega)} \nonumber\\
&\quad + C_2\big((\norm{D^2u}_{L_\infty(\Omega)} + \norm{Du}_{L_2(\Omega)}+\norm{Dg}_{L_\infty(\Omega)})I\widetilde{\omega}^{(1)}(r) + I\widetilde{\omega}^{(2)}(r)\big).
\end{align*}
Because $\Omega$ is bounded and $D^2u\in C(\overline{\Omega})$, we can find some point $\overline{x}\in\overline{\Omega}$ such that
\begin{equation*}
\abs{D^2 u(\overline{x})} = \norm{D^2u}_{L_\infty(\Omega)}.
\end{equation*}
Since $\widetilde{\omega}^{(1)}$ is a Dini function, we can choose $r$ small enough (denoted by $\overline{r}$) such that
$$C_2I\widetilde{\omega}^{(1)}(\overline{r}) \leq 1/2$$
to absorb $\norm{D^2u}_{L_\infty(\Omega)}$ term. Finally we reach
\begin{equation}\label{est-sup-D2u}
\norm{D^2u}_{L_\infty(\Omega)} \leq C(\norm{u}_{W^2_2(\Omega)} +\norm{Dg}_{L_\infty(\Omega)}+ I\widetilde{\omega}^{(2)}(\overline{r})).
\end{equation}

Next for any $x,y\in \Omega$, $x\neq y$, we estimate $D^2u(x)-D^2u(y)$. Let $r=\abs{x-y}$. Due to \eqref{est-sup-D2u}, we only need to focus on the case $r<R_0/2$. As before, we take the minimizers $\vq_{x,2r}$ and $\vq_{y,r}$ for $\phi(x,2r)$ and $\phi(y,r)$. By the triangular inequality
\begin{align*}
&\abs{D^2u(x)-D^2u(y)}^p \\
&\leq \abs{D^2u(x)-\vq_{x,2r}}^p + \abs{D^2u(y)-\vq_{y,r}}^p + \abs{\vq_{x,2r}-D^2u(z)}^p + \abs{\vq_{y,r}-D^2u(z)}^p.
\end{align*}
Taking the average for $z\in \Omega_r(y)$, noting that $\Omega_r(y)\subset\Omega_{2r}(x)$, we have
\begin{align*}
&\abs{D^2u(x)-D^2u(y)}^p \\
&\leq \abs{D^2u(x)-\vq_{x,2r}}^p + \abs{D^2u(y)-\vq_{y,r}}^p + C(d,\delta/M)\phi^p(x,2r) + \phi^p(y,r).
\end{align*}
Now we apply \eqref{est-est-diff-u-average-final}, \eqref{est-sup-D2u}, and the generalized triangular inequality to obtain
\begin{align}
                    \label{est-D^2-diff-mid}
&\abs{D^2u(x)-D^2u(y)} \nonumber\\
&\lesssim \phi(x,2r) + \phi(y,r) + \big((\norm{u}_{W^2_2(\Omega)} +\norm{Dg}_{L_\infty(\Omega)} + I\widetilde{\omega}^{(2)}(\overline{r}))I\widetilde{\omega}^{(1)}(r) + I\widetilde{\omega}^{(2)}(r)\big).
\end{align}
Let $j$ be the integer such that
$$2r \in (\kappa^{j+1}R_0, \kappa^j R_0].$$
From \eqref{est-mean-osc-j-decay}, interpolation inequalities, \eqref{est-sup-D2u}, and \eqref{est-nbhd-equiv-ball}, we obtain
\begin{align*}
&\phi(x, 2r) \\
&\leq (\frac{1}{2})^j \phi(x,2\kappa^{-j}r) + C(\norm{u}_{W^2_2(\Omega)} +\norm{Dg}_{L_\infty(\Omega)}+ I\widetilde{\omega}^{(2)}(\overline{r}))\widetilde{\omega}^{(1)}(2\kappa^{-1}r) + C\widetilde{\omega}^{(2)}(2\kappa^{-1}r)\\
&\lesssim (\frac{r}{R_0})^{\alpha_0} \phi(x,2\kappa^{-j}r) + (\norm{u}_{W^2_2(\Omega)} +\norm{Dg}_{L_\infty(\Omega)}+ I\widetilde{\omega}^{(2)}(\overline{r}))\widetilde{\omega}^{(1)}(2\kappa^{-1}r) + \widetilde{\omega}^{(2)}(2\kappa^{-1}r)\\
&\lesssim (\frac{r}{R_0})^{\alpha_0} R_0^{-d/2}\norm{D^2u}_{L_2(\Omega)} + (\norm{u}_{W^2_2(\Omega)} +\norm{Dg}_{L_\infty(\Omega)}+ I\widetilde{\omega}^{(2)}(\overline{r}))\widetilde{\omega}^{(1)}(2\kappa^{-1}r)\\ &\qquad+ \widetilde{\omega}^{(2)}(2\kappa^{-1}r),
\end{align*}
where $\alpha_0 = \log(2)/\log(1/\kappa)>0$. Similarly, we can obtain the decay rate of $\phi(y,r)$. Substituting these into \eqref{est-D^2-diff-mid} and using Lemma \ref{lem-induced-Dini-function} to bound $\widetilde{\omega}$ by $I\widetilde{\omega}$, we obtain that for any $\abs{x-y}<R_0/2$,
\begin{equation}\label{est-moc-D2u}
\begin{split}
&\abs{D^2u(x)-D^2u(y)} \lesssim \norm{D^2u}_{L_2(\Omega)}\abs{x-y}^{\alpha_0}\\
&\quad + (\|u\|_{W^2_2(\Omega)} +\norm{Dg}_{L_\infty(\Omega)}+ I\widetilde{\omega}^{(2)}(\overline{r}))I\widetilde{\omega}^{(1)}(\abs{x-y}) + I\widetilde{\omega}^{(2)}(\abs{x-y}).
\end{split}
\end{equation}
Clearly, the right-hand-side goes to zero as $r$ goes to zero, which gives us the desired estimate for the modulus of continuity of $D^2u$.

Now it remains to remove the assumption $u\in C^2(\overline{\Omega})$. For this we consider the mollified problem:
\begin{equation*}
\begin{cases}
-a_{ij}^{(n)}D_{ij}u_n + u_n = f^{(n)} + u &\text{in}\,\Omega^{(n)},\\
\beta^{(n)}\cdot Du_n = g^{(n)} &\text{on}\,\p\Omega^{(n)},
\end{cases}
\end{equation*}
where for some fixed $\alpha\in(0,1)$,
\begin{equation*}
\Omega^{(n)}\nearrow\Omega, \quad \Omega^{(n)},\vec{\beta}^{(n)}, g^{(n)}\in C^{1,\alpha},\quad a_{ij}^{(n)}, f^{(n)}\in C^\alpha
\end{equation*}
with corresponding moduli of continuity (either in the $L_1$ or $L_\infty$ sense), which are uniform with respect to $n$. Note that as mentioned before, by the $W^2_q$ well-posedness, bootstrap, and the Sobolev embedding, we can derive that any $W^2_2$-strong solution $u$ is also $C^\alpha$. According to \cite{Sa}, we can find a unique solution $u_n\in C^{2,\alpha}(\Omega^{(n)})$ for each $n$. Now, using \eqref{est-sup-D2u}, \eqref{est-moc-D2u}, the Arzela-Ascoli theorem, and a diagonal argument, we can obtain a subsequence which converges in $C^2(\overline{\Omega^{(k)}})$ for every $k$. Clearly the limit $u_\infty \in C^2(\overline{\Omega})$ and satisfies the equation. To see that $u_\infty$ satisfies the boundary condition, we extend
$$
G^{(n)}:={\vec{\beta}}^{(n)}\cdot Du_n-g^{(n)}\in \mathring{W}^1_2(\Omega^{(n)})$$
to be zero outside $\Omega^{(n)}$, so that $G^{(n)} \in \mathring{W}^1_2(\Omega)$. Since the $W^{1}_2(\Omega)$ norm of $G^{(n)}$ is uniformly bounded, by passing to a further subsequence and noting that $\mathring{W}^1_2(\Omega)$ is weakly closed in $W^1_2(\Omega)$, we obtain $\vec{\beta}\cdot Du_\infty-g=0$. Due to the uniqueness of strong solutions in small Lipschitz domains proved in \cite{DL}, $u_\infty=u$. Hence any $W^2_2$-strong solution must also be $C^2(\overline{\Omega})$. This finishes the proof of Theorem \ref{thm-main}.

\section{Nonlinear Equations}
Our method can also be applied to derive the $C^2(\overline{\Omega})$ regularity for fully nonlinear equations. In this section, we prove Theorem \ref{thm-nonlinear}. As preparation, we first introduce two lemmas, which can be viewed as the nonlinear version of Lemma \ref{lem-C3} and Corollary \ref{cor-strong-p-1}, as well as an interpolation inequality. The first lemma deals with the function $F:\bS^{d\times d}\rightarrow \bR$, satisfying
$$F \,\,\text{is concave}, \quad\lambda \norm{M}\leq F(M+N)-F(N)\leq \lambda^{-1}\norm{M},\,\,\forall M,N\in \bS^{d\times d},\,\text{and}\,M\geq 0.$$
\begin{lemma}\label{lem-C2alpha-mix}
For any continuous function $\varphi$ and constant $C$, there exists a unique solution $u$ in $C^2(B_1^{+}\cup\Sigma_1)\cap C(\overline{B_1^{+}})$ to
\begin{equation}\label{eqn-mixed-nonlinear}
F(D^2u)=C \,\,\text{in}\,B_1^{+},\quad \frac{\p u}{\p x^d}=0 \,\,\text{on}\, \Sigma_1,\quad u=\varphi \,\,\text{on}\, \sigma_1.
\end{equation}
Furthermore, there exists some constant $\overline{\alpha}=\overline{\alpha}(d,\lambda)\in(0,1)$, such that $u\in C^{2,\overline{\alpha}}_{loc}(B_1^+\cup\Sigma_1)$, and
\begin{equation}\label{est-C2alpha-mixed}
[D^2u]_{C^{\overline{\alpha}}(B_{1/2}^{+})} \leq N(d,\lambda,p)(|D^2u -\vec{q}|^p)^{1/p}_{B_1^{+}}
\end{equation}
holds for any $p\in (0,1)$ and any constant matrix $\vec{q}\in\bS^{d\times d}$.
\end{lemma}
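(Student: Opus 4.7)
The plan is to establish existence and uniqueness by approximation and comparison, to prove the $C^{2,\bar\alpha}_{\mathrm{loc}}(B_1^+\cup\Sigma_1)$ regularity by combining the Evans--Krylov theorem with a reflection argument across the flat Neumann face $\Sigma_1$, and to derive the Campanato-type estimate \eqref{est-C2alpha-mixed} by shifting the solution by a quadratic and upgrading an $L^\infty$ bound on $D^2 u-\vq$ to an $L^p$-mean bound via a standard interpolation.

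For existence, I would approximate $F$ by smooth concave operators $F_n$ (for instance via sup-convolution) with the same ellipticity constants and approximate $\varphi$ by $C^{2,\alpha}$ data $\varphi_n$. Classical theory for smooth fully nonlinear equations with mixed boundary data (see, e.g., \cite{LT}) yields a classical solution $u_n\in C^{2,\alpha}(\overline{B_1^+})$. The comparison principle provides uniform $L^\infty$ control and ultimately uniqueness; interior Evans--Krylov estimates together with boundary $C^{1,\alpha}$ regularity for nonlinear Neumann problems on half balls from \cite{MS} give enough compactness to extract a subsequential limit $u$ inheriting the regularity proved below.

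For the regularity, interior $C^{2,\bar\alpha}$ is the classical Evans--Krylov theorem applied on concentric subballs compactly contained in $B_1^+$. To reach $\Sigma_1$, consider the even reflection $\tilde u(x',x_d):=u(x',|x_d|)$, which is $C^{1,1}$ across $\{x_d=0\}$ by virtue of the homogeneous Neumann condition. With $R:=\mathrm{diag}(1,\ldots,1,-1)$, the function $\tilde u$ satisfies $F(D^2\tilde u)=C$ on $\{x_d>0\}$ and $\tilde F(D^2\tilde u):=F(R\,D^2\tilde u\,R)=C$ on $\{x_d<0\}$; both operators are concave and uniformly elliptic with identical constants and they agree on matrices whose mixed $(i,d)$-entries ($i<d$) vanish. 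Applying Evans--Krylov in each half-ball and matching across the interface $\Sigma_1$ through the $C^{1,1}$ regularity yields $\tilde u\in C^{2,\bar\alpha}_{\mathrm{loc}}$ for some $\bar\alpha=\bar\alpha(d,\lambda)\in(0,1)$, hence $u\in C^{2,\bar\alpha}_{\mathrm{loc}}(B_1^+\cup\Sigma_1)$. An alternative route I would keep in reserve is to differentiate the equation in each tangential direction $e_i$ ($i<d$) so that $\partial_i u$ solves a linear uniformly elliptic equation with $\partial_d(\partial_i u)=0$ on $\Sigma_1$, whose H\"older regularity follows from Krylov--Safonov after reflection; bootstrapping through the linearization and using the concavity of $F$ to recover $\partial_{dd}u$ gives the same conclusion.

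For \eqref{est-C2alpha-mixed}, note that for any $\vq\in\bS^{d\times d}$ the shifted function $v:=u-\tfrac12\,\vq x\cdot x$ satisfies $F(D^2 v+\vq)=C$, which is again a concave uniformly elliptic equation with identical constants and the zero Neumann condition on $\Sigma_1$. Applying the boundary $C^{2,\bar\alpha}$ bound just proved to $v$ gives
$$
[D^2u]_{C^{\bar\alpha}(B_{1/2}^+)}=[D^2v]_{C^{\bar\alpha}(B_{1/2}^+)}\le N\,\|D^2u-\vq\|_{L^\infty(B_{3/4}^+)}.
$$
The remaining $L^\infty$-to-$L^p$ step is a scaling/covering interpolation in the spirit of \cite[Lemma~2.10]{D-book}: one applies the preceding H\"older estimate on balls of radius $r$ to bound pointwise values of $D^2u-\vq$ by local $L^p$-means plus a small multiple of $r^{\bar\alpha}[D^2u]_{C^{\bar\alpha}}$, then iterates and absorbs the seminorm into the left-hand side. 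The main obstacle will be the boundary $C^{2,\bar\alpha}$ step at $\Sigma_1$, since $F$ is not assumed invariant under $M\mapsto R\,M\,R$; the two-phase interface matching or the tangential-differentiation route outlined above is what bypasses this asymmetry.
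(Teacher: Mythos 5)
Your plan for the local $C^{2,\bar\alpha}$ regularity (even reflection across $\Sigma_1$, with the caveat that the reflected operator differs on $\{x_d<0\}$) is a reasonable alternative to what the paper actually does, which is simply to cite the boundary Evans--Krylov estimate for the Neumann problem from \cite[Theorem~8.1]{Sa-notes}. But the two-phase matching that you flag as "the main obstacle" is indeed an obstacle, not a side remark: Evans--Krylov requires a single (at least continuous) operator, and across $\Sigma_1$ your $\tilde F$ is generally discontinuous, so you cannot apply the theorem to $\tilde u$ in a full ball. Your tangential-differentiation alternative is closer to a workable route, but as written neither variant is a proof.

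The concrete gap, however, is in the final Campanato step. You set $v := u - \tfrac12\,\vq\,x\cdot x$ for an \emph{arbitrary} $\vq\in\bS^{d\times d}$ and claim $v$ still satisfies the homogeneous Neumann condition on $\Sigma_1$. That is false: on $\{x_d=0\}$ one has
\begin{equation*}
\partial_d v = \partial_d u - (\vq x)_d = -\sum_{j<d} q_{dj}\,x_j,
\end{equation*}
which vanishes identically only if the mixed entries $q_{id}$ ($i<d$) are zero. This is precisely why the paper shifts by the truncated matrix $\overline{\vq}$ (obtained from $\vq$ by zeroing out the $(i,d)$-entries for $i<d$), subtracts a tangential affine function $l(x')$ to normalize $v$ and $D_{x'}v$, and then recovers the full $|D^2 u-\vq|$ on the right-hand side via the Sobolev--Poincar\'e inequality, the boundary Poincar\'e inequality (using $D_d v=0$ on $\Sigma_1$), and a $W^1_2$ estimate for the auxiliary Dirichlet problem $-\Delta(D_d v)=\dv(-(0,\ldots,\Delta v))$ to control the mixed second derivatives $D_{x'}D_d v$. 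A second, related omission: the Evans--Krylov boundary estimate you invoke controls $[D^2v]_{C^{\bar\alpha}}$ by $\|v\|_{L_2}$ (or $\|v\|_{L_\infty}$), not directly by $\|D^2v\|_{L^\infty}$, so passing to $\|D^2 u-\vq\|$ genuinely requires the affine normalization and Poincar\'e chain just described, which your proposal skips. The closing $L_\infty$-to-$L_p$ iteration via \cite[Lemma~2.10]{D-book} is fine once the $L_2$ version is in place.
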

\begin{proof}
The unique solvability of \eqref{eqn-mixed-nonlinear} and the following boundary estimate of the Evans-Krylov type for Neumann problem are classical:
\begin{equation}\label{est-Holder-nonlinear-original}
[D^2u]_{C^{\overline{\alpha}}(B_{1/2}^{+})} \leq N(d,\lambda)\norm{u}_{L_2(B_1^{+})}.
\end{equation}
See, for example, \cite[Theorem~8.1]{Sa-notes}.
Now we prove
\begin{equation}\label{est-C2alpha-2}
[D^2u]_{C^{\overline{\alpha}}(B_{1/2}^{+})} \leq N(d,\lambda)(|D^2u -\vec{q}|^2)^{1/2}_{B_1^{+}},\quad \forall \vec{q}\in\bS^{d\times d}.
\end{equation}
For this, consider
$$v(x):=u(x)-\langle x,\overline{\vec{q}}x\rangle/2 -l(x'),\quad \overline{\vec{q}}:=\begin{bmatrix}
\vec{q}'&0\\0&q_{dd}
\end{bmatrix}.$$
where $\vec{q}'\in \bS^{(d-1)\times(d-1)}$ is the first $(d-1)\times (d-1)$ submatrix of $\vec{q}$, $x'=(x_1,\ldots,x_{d-1})$, and $l(x')$ is the affine function chosen suitably to make $(v)_{B_{3/4}^{+}} = (D_{x'}v)_{B_{3/4}^{+}}=0$.
Then $D^2v=D^2u-\overline{\vec{q}}$, and $v$ satisfies
\begin{equation*}
G(D^2v):=F(D^2v+\overline{\vec{q}})=C \,\,\text{in}\,B_1^{+},\quad \frac{\p v}{\p x^d}=0 \,\,\text{on}\, \Sigma_1.
\end{equation*}
From a rescaled version of \eqref{est-Holder-nonlinear-original} for $v$, the Sobolev-Poincar\'e inequality, and the boundary Poincar\'e inequality, we obtain
\begin{align}
[D^2v]_{C^{\overline{\alpha}}(B_{1/2}^{+})} &\lesssim (|v|^2)^{1/2}_{B_{3/4}^{+}}\nonumber\\
&\lesssim (|D_{x'}v|^2)^{1/2}_{B_{3/4}^{+}} + (|D_dv|^2)^{1/2}_{B_{3/4}^{+}}\lesssim (|DD_{x'}v|^2)^{1/2}_{B_{3/4}^{+}} + (|D_d^2v|^2)^{1/2}_{B_{3/4}^{+}}.\label{est-holder-halfspace-u-v}
\end{align}
We can remove the $D_{x'}D_dv$ term from the right-hand side. Indeed, by applying the boundary $W^1_2$ estimate of the Dirichlet problem
\begin{equation*}
-\Delta(D_dv)=\dv(-(0,\ldots,\Delta v))\,\,\text{in}\,B_1^{+},\quad D_dv =0\,\,\text{on}\, \Sigma_1,
\end{equation*}
we have
\begin{equation*}
(|DD_dv|^2)^{1/2}_{B_{3/4}^{+}}\lesssim (|\Delta v|^2)^{1/2}_{B_1^{+}} + (|D_dv|^2)^{1/2}_{B_1^{+}}\lesssim (|\Delta v|^2)^{1/2}_{B_1^{+}} + (|D^2_dv|^2)^{1/2}_{B_1^{+}},
\end{equation*}
where in the last inequality, again we use the boundary Poincar\'e inequality. Substituting this into \eqref{est-holder-halfspace-u-v}, we obtain \eqref{est-C2alpha-2}:
\begin{equation*}
\begin{split}
[D^2u]_{C^{\overline{\alpha}}(B_{1/2}^{+})} = [D^2v]_{C^{\overline{\alpha}}(B_{1/2}^{+})} &\lesssim (|D_{x'}^2 v|^2)^{1/2}_{B_1^{+}} + (|D_d^2 v|^2)^{1/2}_{B_1^{+}} \\
&= (|D_{x'}^2 u - \overline{\vec{q}}|^2)^{1/2}_{B_1^{+}} + (|D_d^2 u - q_{dd}|^2)^{1/2}_{B_1^{+}} \leq (|D^2u -\vec{q}|^2)^{1/2}_{B_1^{+}}.
\end{split}
\end{equation*}
From \eqref{est-C2alpha-2}, we obtain \eqref{est-C2alpha-mixed} using standard scaling and iteration argument as mentioned in the proof of Lemma \ref{lem-C3}. Notice that the corresponding interior version of \eqref{est-C2alpha-2} can be obtained by a similar technique by applying the interior Evans-Krylov estimate and the Sobolev-Poincar\'e inequality to the function
$$
v(x):=u(x)-\langle x,qx\rangle/2 - l(x),
$$
where $l(x)$ is the affine function such that $(v)_{B_{3/4}} = (Dv)_{B_{3/4}}=0$. The lemma is proved.
\end{proof}
The second lemma is a boundary $W^{2}_{\epsi}$-estimate in the spirit of \cite{Lin}. See, for example, \cite[pp.~19]{DL}.
\begin{lemma}\label{lem-w2epsi}
Suppose that $u\in W^2_d(B_1^{+})$ and $f\in L_d(B_1^{+})$ satisfy
\begin{equation}\label{eqn-theta-nonlinear}
a_{ij}D_{ij}u = f\,\,\text{in}\,B_1^{+},\quad \frac{\p u}{\p x^d}=0 \,\,\text{on}\, \Sigma_1,\quad u=0 \,\,\text{on}\, \sigma_1,
\end{equation}
where $a_{ij}$ is symmetric, bounded measurable, and uniformly elliptic with constant $\lambda$. Then there exists some $\epsi=\epsi(d,\lambda)\in(0,1)$, such that
\begin{equation*}
\norm{D^2u}_{L_\epsi(B_1^{+})} \le N(d,\lambda) \norm{f}_{L_d(B_1^{+})}.
\end{equation*}
\end{lemma}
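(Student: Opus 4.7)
The plan is to reduce the mixed Dirichlet/Neumann problem on $B_1^+$ to a pure Dirichlet problem on the full ball $B_1$ via even reflection across the flat part $\Sigma_1$, and then invoke a global $W^{2,\epsi}$ estimate of Lin \cite{Lin} type on $B_1$.

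First, define the even extensions $\tilde u(x',x_d):=u(x',|x_d|)$ and $\tilde f(x',x_d):=f(x',|x_d|)$. The Neumann condition $D_d u=0$ on $\Sigma_1$, interpreted through the trace of $Du\in W^1_d(B_1^+)$, guarantees that no distributional contribution appears across $\Sigma_1$, so $\tilde u\in W^2_d(B_1)$. Since in addition $u=0$ on $\sigma_1$, the reflection yields $\tilde u=0$ on the full sphere $\partial B_1$. The coefficients are extended by
\[
\tilde a_{ij}(x',x_d):=\epsi_i\epsi_j\, a_{ij}(x',|x_d|),
\]
with $\epsi_k=1$ for $k<d$ and $\epsi_d=\operatorname{sgn}(x_d)$. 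Since $\zeta\mapsto(\epsi_1\zeta_1,\dots,\epsi_d\zeta_d)$ is a linear isometry of $\bR^d$, the matrix $(\tilde a_{ij})$ remains symmetric, bounded measurable, and uniformly elliptic with the same constant $\lambda$. A direct chain-rule computation, together with $D_{ij}\tilde u(x',x_d)=\epsi_i\epsi_j\, D_{ij}u(x',|x_d|)$, shows that the reflected equation $\tilde a_{ij}D_{ij}\tilde u=\tilde f$ holds almost everywhere in $B_1$.

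The problem is now reduced to a non-divergence uniformly elliptic strong solution on $B_1$ with zero Dirichlet data on $\partial B_1$ and right-hand side in $L_d$. The Aleksandrov-Bakel'man-Pucci maximum principle gives $\norm{\tilde u}_{L_\infty(B_1)}\le N\norm{\tilde f}_{L_d(B_1)}$. Combining this with the global $W^{2,\epsi}$ estimate for such Dirichlet problems (the boundary version of Lin's bound, obtained by the Caffarelli paraboloid-touching method with a global barrier that exploits the zero boundary trace, see e.g.\ the corresponding argument in \cite{DL}), one obtains
\[
\norm{D^2\tilde u}_{L_\epsi(B_1)}\le N\bigl(\norm{\tilde u}_{L_\infty(B_1)}+\norm{\tilde f}_{L_d(B_1)}\bigr)\le N'\norm{\tilde f}_{L_d(B_1)}\le 2^{1/d}N'\norm{f}_{L_d(B_1^+)}.
\]
Restricting to $B_1^+$ completes the proof.

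The main technical point, modulo invoking a standard theorem, is the verification that the reflected coefficients $\tilde a_{ij}$ remain uniformly elliptic with the original constant $\lambda$ across the possible discontinuity on $\Sigma_1$, and that the equation $\tilde a_{ij}D_{ij}\tilde u=\tilde f$ actually holds in the a.e.\ sense on all of $B_1$; once this is in place, the reduction to a Dirichlet problem on the ball and the appeal to the global $W^{2,\epsi}$ bound are routine. A minor auxiliary check is that the zero Neumann trace on $\Sigma_1$ genuinely ensures $\tilde u\in W^2_d(B_1)$, which is a standard consequence of Sobolev trace theory for the first derivatives of $u$.
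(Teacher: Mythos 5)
The paper does not actually supply a proof of Lemma~\ref{lem-w2epsi}: it only points to \cite{Lin} and \cite[pp.~19]{DL}. Your reflection argument is a correct and detailed way to supply that proof, and it matches a device the authors themselves use in the proof of Lemma~\ref{lem-homogeneous}, where the mixed problem for $\theta$ is reduced to a Dirichlet problem on a full ball by even extension of the solution and the source together with the sign-flipping extension $\hat a_{kl}=\epsi_k\epsi_l(\widetilde{a_{kl}})$. Your checks are the right ones: $D_{ij}\tilde u=\epsi_i\epsi_j D_{ij}u(\cdot,|x_d|)$ so the squares of signs cancel and the reflected equation holds a.e.; the map $\zeta\mapsto(\epsi_1\zeta_1,\dots,\epsi_d\zeta_d)$ being an isometry preserves ellipticity and symmetry; the Neumann trace $D_du=0$ on $\Sigma_1$ is exactly what makes the odd extension of $D_du$ land in $W^1_d$, so $\tilde u\in W^2_d(B_1)$; and the zero Dirichlet trace on $\sigma_1$ carries over to $\partial B_1$, so that ABP bounds $\|\tilde u\|_{L_\infty}$ by $\|\tilde f\|_{L_d}$. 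The only place you lean on an outside theorem without proof is the global-up-to-$\partial B_1$ form of Lin's $W^{2,\epsi}$ estimate for the Dirichlet problem with merely measurable coefficients; this is a standard boundary/covering extension of \cite{Lin} and is the same ingredient the paper implicitly invokes via \cite[pp.~19]{DL}, so this is an acceptable level of citation. In short: correct, and essentially the same route the paper indicates, just written out.
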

Recall the notation $\rho_{D^2u}(r):=\sup_{|x-y|\leq r, x,y\in \Omega}|D^2u(x)-D^2u(y)|$. Proceeding as in \cite[Lemma~3.1.4,~Theorem~3.2.1]{K-book}, we have the following interpolation inequality.
\begin{lemma}\label{lem-interpolation}
Let $\Omega$ be a domain in $\bR^d$ satisfying the interior cone condition with opening $\theta(\Omega)>0$ and height $h(\Omega)>0$. Then for any $u\in C^2(\overline{\Omega})$ and $\tau\in (0,\frac{\theta(\Omega)}{\theta(\Omega)+\pi}h(\Omega))$,
\begin{equation*}
\norm{D^2u}_{L_\infty(\Omega)} \leq C(d,\theta(\Omega))(\rho_{D^2u}(\tau) + \tau^{-2}\norm{u}_{L_\infty(\Omega)}).
\end{equation*}
\end{lemma}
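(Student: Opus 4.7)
The plan is to follow the standard Krylov-style interpolation argument: bound $|D^2u(x)|$ at each $x\in\overline{\Omega}$ by testing Taylor's formula at a suitable finite set of nearby points lying inside $\Omega$, with the remainder controlled by the modulus of continuity $\rho_{D^2u}$. The role of the cone condition is exactly to guarantee the existence of enough test points in $\Omega$, all close to $x$.

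First I would fix $x\in\overline{\Omega}$ and invoke the interior cone condition to obtain a closed cone $K_x\subset\overline{\Omega}$ with vertex $x$, opening $\theta(\Omega)$, and height $h(\Omega)$. A short geometric computation shows that the quantitative upper bound $\tau<\frac{\theta}{\theta+\pi}h$ on $\tau$ is precisely what is needed so that a configuration of points of the form $\{x+\tau v_k\}_{k=1}^N$, with unit vectors $v_k$ in a prescribed open subset of $K_x$, all lie inside $K_x\subset\overline{\Omega}$. I would then select $N=1+d+d(d+1)/2$ unit vectors $v_1,\dots,v_N$ within this open solid angle so that the corresponding second-order Taylor ``evaluation'' matrix (with unknowns $u(x)$, the $d$ components of $Du(x)$, and the $d(d+1)/2$ entries of $D^2u(x)$) is invertible with a condition number depending only on $d$ and $\theta(\Omega)$. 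Existence of such $v_k$ follows because the cone contains an open set of directions, and a generic configuration there yields nondegeneracy; compactness and continuity promote this to a quantitative bound in terms of $\theta(\Omega)$.

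For each $k$, Taylor's formula gives
\begin{equation*}
u(x+\tau v_k)=u(x)+\tau\, Du(x)\cdot v_k+\tfrac{\tau^2}{2}\, v_k^T D^2u(x)\, v_k+R_k,
\end{equation*}
where, writing the remainder in integral form,
\begin{equation*}
R_k=\int_0^\tau (\tau-t)\bigl(v_k^T D^2u(x+tv_k)v_k-v_k^T D^2u(x)v_k\bigr)\,dt,
\end{equation*}
so $|R_k|\le \tfrac{\tau^2}{2}\rho_{D^2u}(\tau)$. Viewing the $N$ equations as a linear system for the $N$ unknowns $(u(x),Du(x),D^2u(x))$ and inverting (with a matrix bound depending only on $d$ and $\theta(\Omega)$), I solve for the entries of $D^2u(x)$, obtaining
\begin{equation*}
|D^2u(x)|\le C(d,\theta(\Omega))\Bigl(\tau^{-2}\|u\|_{L_\infty(\Omega)}+\rho_{D^2u}(\tau)\Bigr),
\end{equation*}
and taking the supremum over $x\in\overline{\Omega}$ gives the lemma.

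The main obstacle is the quantitative well-conditioning of the Taylor evaluation system: one needs to choose the vectors $v_k$ inside $K_x$ so that the resulting matrix is invertible with norm of the inverse controlled purely by $d$ and $\theta(\Omega)$, independently of $x$ and $\tau$. This is the content of \cite[Lemma~3.1.4]{K-book}, and once it is in hand, the rest is the routine Taylor-remainder estimate above, exactly as in \cite[Theorem~3.2.1]{K-book}.
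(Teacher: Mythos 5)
Your proposal is correct and follows essentially the same route the paper takes, namely the Krylov-style interpolation argument via Taylor expansion at finitely many cone points and inversion of the resulting well-conditioned linear system, which is exactly the content of \cite[Lemma~3.1.4, Theorem~3.2.1]{K-book} that the paper cites. One small inaccuracy worth noting: with test points $x+\tau v_k$ for unit vectors $v_k$ in the cone, the containment in $K_x$ only requires $\tau<h(\Omega)$; the stronger restriction $\tau<\frac{\theta(\Omega)}{\theta(\Omega)+\pi}h(\Omega)$ (which, via $\sin(\theta/2)\geq\theta/\pi$, guarantees a ball of radius $\tau$ fits inside the cone) reflects the particular configuration used in Krylov's lemma rather than your single-step Taylor setup, but this does not affect the validity of your argument on the stated range of $\tau$.
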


Now we turn to the proof of Theorem \ref{thm-nonlinear}. This is similar to that of Theorem \ref{thm-main}, which we will sketch here.
\begin{proof}[Proof of Theorem \ref{thm-nonlinear}]
The proof is spitted into several steps. We first derive the a priori estimates for $\rho_{D^2u}$ corresponding to \eqref{est-moc-D2u}, assuming $u\in C^2(\overline{\Omega})$. Then we use the interpolation and the Aleksandrov-Bakel'man-Pucci (ABP) maximum principle in \cite[Theorem~6.1]{Lie-book} to obtain the estimate for $\norm{D^2u}_{L_\infty(\Omega)}$ and remove all the $u$ terms on the right-hand side of the estimates. Lastly, we construct a $C^{2,\alpha}$-approximating sequence. Using the uniform estimates, we can show that the limit exists, solves the problem, and is in $C^2(\overline{\Omega})$.

{\em Step 1: The a priori estimate}. The key step is to derive the $L_\epsi$-mean oscillation estimate corresponding to \eqref{est-u-mean-Dini}: for any $x\in\overline{\Omega}, \kappa \in (0,1)$, and $r\in (0, R_0]$,
\begin{align}
&\phi(x,\kappa r) \nonumber\\
            \label{est-mean-osc-nonlinear}
&\leq N\kappa^{\overline{\alpha}}\phi(x,r) + N(\kappa)\big((\norm{D^2u}_{L_\infty(\Omega)} + \norm{Du}_{L_\infty(\Omega)} + \norm{u}_{L_\infty(\Omega)})\omega^{(3)}(r) + \omega^{(4)}(r)\big).
\end{align}
Here $\phi$ is defined in \eqref{eqn-def-Lp-mean-osc} with $p$ replaced by $\epsi$ which is given in Lemma \ref{lem-w2epsi}, and $\overline{\alpha}$ is introduced in Lemma \ref{lem-C2alpha-mix}. The Dini functions $\omega^{(3)}$ and $\omega^{(4)}$ are defined as follows
\begin{equation*}
\begin{split}
&\omega^{(3)}(r):= r(\norm{D\beta}_{L_\infty}+1) + \rho_{D\beta}(r) + \rho_{D\psi_0}(r) + \omega_F(r),\\
&\omega^{(4)}(r):= r\norm{Dg}_{L_\infty} + \rho_{Dg}(r) + \rho_{D\psi_0}(r)\norm{Dg}_{L_\infty} + \omega_F(r).
\end{split}
\end{equation*}
Clearly, it suffices to prove \eqref{est-mean-osc-nonlinear} for two cases: $x\in\p\Omega$ or $B_r(x)\subset\Omega$. We only focus on the first one, since the same argument below dealing with the Neumann problem in half balls will still work for the interior case.  As before, we take the coordinates $y$ centered at $x$.

For $r\leq R_0$, as before, we find $w\in W^2_d(\Omega)$ solving \eqref{eqn-def-w}, as well as the parabola $h$ defined in \eqref{eqn-def-h}. Note that according to \cite{DL}, such solution exists, and the following $L_d$-version of \eqref{est-w}
\begin{equation}\label{est-w-Ld}
(\abs{D^2w}^d)^{1/d}_{\Omega_{r/2}}\leq C\big((r\norm{D\beta}_{L_\infty}+\rho_{D\beta}(r))(\norm{D^2u}_{L_\infty(\Omega_r)} + \norm{Du}_{L_\infty(\Omega_r)}) + r\norm{Dg}_{L_\infty} + \rho_{Dg}(r)\big)
\end{equation}
holds. Observe that the right-hand side is a Dini function. Now $v:=u-w-h$ satisfies
\begin{equation}\label{eqn-v-nonlinear}
\begin{cases}
G_0(D^2v(y))= f(y) - F_0(D^2h,(Du)_{\Omega_r}, (u)_{\Omega_r}) &\text{in}\,\Omega_r,\\
\p v/\p y_d=0\quad &\text{on}\,\Gamma_r,
\end{cases}
\end{equation}
where $F_0$ is the function chosen in Assumption \ref{ass-nonlinear} and
\begin{equation*}
\begin{split}
&G_0(M):=F_0(M+D^2h,(Du)_{\Omega_r}, (u)_{\Omega_r}) - F_0(D^2h,(Du)_{\Omega_r}, (u)_{\Omega_r}),\\
&f(y):=F_0(D^2u(y)-D^2w(y),(Du)_{\Omega_r}, (u)_{\Omega_r}).
\end{split}
\end{equation*}
As in the linear case, we first prove the mean oscillation estimate for $D^2v$,
\begin{equation}\label{est-mean-osc-v-nonlinear}
\begin{split}
&(\fint_{\Omega_{\kappa r}}|D^2v -\vec{V}_{\kappa r}|^\epsi)^{1/\epsi}
\lesssim \kappa^{\overline{\alpha}} (\fint_{\Omega_r}|D^2v -\vec{q}|^\epsi)^{1/\epsi} \\
&\quad + C(\kappa) \left(\omega^{(3)}(r)(\norm{D^2u}_{L_\infty}+\norm{Du}_{L_\infty}+\norm{u}_{L_\infty})+\omega^{(4)}(r)\right),
\end{split}
\end{equation}
which can be compared to \eqref{est-mean-osc-homogeneous}.
To prove this, we flatten the boundary for the problem \eqref{eqn-v-nonlinear} using the change of variables \eqref{eqn-cov}. For $z\in B^{+}_{(\delta/M)r}$, the equation becomes
\begin{equation}\label{eqn-v-flat-nonlinear}
\widetilde{G}_0(D^2_zv):=G_0((\frac{\p z}{\p y}(0))^TD^2_zv\frac{\p z}{\p y}(0)) = \widetilde{f} - F_0(D^2h,(Du)_{\Omega_r},(u)_{\Omega_r}),
\end{equation}
where
$$\widetilde{f}:= G_0((\frac{\p z}{\p y}(0))^TD^2_zv\frac{\p z}{\p y}(0)) -G_0((\frac{\p z}{\p y})^TD^2_zv\frac{\p z}{\p y} + \frac{\p v}{\p z_d}D^2\psi) + f.$$
We decompose $v=\theta+\xi$, where $\xi\in C^2(B^{+}_{(\delta/M)r}\cup\Sigma_{(\delta/M)r})\cap C(\overline{B^{+}_{(\delta/M)r}})$ solves
\begin{equation}\label{eqn-xi-nonlinear}
\begin{cases}
\widetilde{G}_0(D^2_z\xi)= - F_0(D^2h,(Du)_{\Omega_r},(u)_{\Omega_r}) &\text{in}\,B^{+}_{(\delta/M)r},\\
\p\xi/\p z_d=0 &\text{on}\,\Sigma_{(\delta/M)r},\\
\xi=v &\text{on}\,\sigma_{(\delta/M)r}.
\end{cases}
\end{equation}
Such $\xi$ exists and satisfies the boundary $C^{2,\overline{\alpha}}$ estimate according to Lemma \ref{lem-C2alpha-mix}. Recalling \eqref{eqn-inclusion}, we can further deduce from a rescaled version of \eqref{est-C2alpha-mixed} that for any $\widetilde{\vec{q}}\in\bS^{d\times d}$,
\begin{equation}\label{est-xi-nonlinear}
\big(\fint_{B_{2\kappa r}^{+}} \abs{D^2\xi - (D^2\xi)_{B_{2\kappa r}^{+}}}^\epsi\big)^{1/\epsi} \lesssim (2\kappa r)^{\overline{\alpha}}[D^2\xi]_{C^{\overline{\alpha}}(B_{2\kappa r}^{+})} \lesssim\kappa^{\overline{\alpha}} (\abs{D^2\xi - \widetilde{\vec{q}}}^p)^{1/p}_{B_{(\delta/M)r}^{+}}.
\end{equation}
Taking the difference between \eqref{eqn-v-flat-nonlinear} and the first line of \eqref{eqn-xi-nonlinear}, noting that $\widetilde{G}_0(0)=G_0(0)=0$, we see that $\theta$ satisfies \eqref{eqn-theta-nonlinear} with $\lambda$, $f$, and $B_1^+$ replaced by $\lambda/d$, $\widetilde{f}$, and $B_{\delta/M}^+$. By Lemma \ref{lem-w2epsi},
\begin{equation}\label{est-theta-nonlienar}
(\abs{D^2\theta}^\epsi)^{1/\epsi}_{B_{(\delta/M)r}^{+}} \lesssim (\abs{\widetilde{f}}^d)^{1/d}_{B_{(\delta/M)r}^{+}}.
\end{equation}
Using $F[u]=0$, Assumption \ref{ass-nonlinear}, Hardy's inequality, and \eqref{est-w-Ld}, we can estimate $\widetilde{f}$ as follows
\begin{align*}
(|\widetilde{f}|^d)^{1/d}_{B_{(\delta/M)r}^{+}} \lesssim& (\abs{G_0((\frac{\p z}{\p y}(0))^TD^2_zv\frac{\p z}{\p y}(0))-G_0((\frac{\p z}{\p y})^TD^2_zv\frac{\p z}{\p y} + \frac{\p v}{\p z_d}D^2\psi)}^d)^{1/d}_{B_{(\delta/M)r}^{+}}\\
&+(\abs{F_0(D^2u-D^2w,(Du)_{\Omega_r}, (u)_{\Omega_r}) - F_0(D^2u,(Du)_{\Omega_{r}}, (u)_{\Omega_r})}^d)^{1/d}_{\Omega_{r/2}}\\
&+(\abs{F_0(D^2u,(Du)_{\Omega_r}, (u)_{\Omega_r}) - F(D^2u,(Du)_{\Omega_r}, (u)_{\Omega_r},y)}^d)^{1/d}_{\Omega_{r/2}}\\
&+(\abs{F(D^2u,(Du)_{\Omega_r}, (u)_{\Omega_r},y) - F(D^2u,Du,u,y)}^d)^{1/d}_{\Omega_{r/2}}
\\
\lesssim& \rho_{D\psi_0}(r)(|D^2v|^d)^{1/d}_{\Omega_{r/2}} + (|D^2w|^d)^{1/d}_{\Omega_{r/2}}\\
&+ \omega_F(r)(\norm{D^2u}_{L_\infty}+\norm{Du}_{L_\infty}+\norm{u}_{L_\infty}+1) + r(\norm{D^2u}_{L_\infty}+\norm{Du}_{L_\infty})\\
\lesssim& \omega^{(3)}(r)(\norm{D^2u}_{L_\infty}+\norm{Du}_{L_\infty}+\norm{u}_{L_\infty}) + \omega^{(4)}(r).
\end{align*}
Combining \eqref{est-xi-nonlinear} and \eqref{est-theta-nonlienar}, and following the proof of Lemma \ref{lem-homogeneous}, we obtain \eqref{est-mean-osc-v-nonlinear}. Then, the same steps as in the proof of Proposition \ref{prop-mean-oscillation} leads to \eqref{est-mean-osc-nonlinear}. The iteration argument in the proof of Theorem \ref{thm-main} gives, for any $r\in (0,R_0/2)$,
\begin{equation}
\rho_{D^2 u}(r)
\leq C\big(\norm{D^2u}_{L_\infty(\Omega)}r^{\alpha_0} + (\norm{D^2u}_{L_\infty(\Omega)} + \norm{u}_{L_\infty(\Omega)})I\widetilde{\omega}^{(3)}(r) + I\widetilde{\omega}^{(4)}(r)\big),\label{est-apriori-moc-D2u}
\end{equation}
where as before, $\alpha_0=\log(2)/\log(1/\kappa)>0$.

{\em Step 2: Interpolation and maximum principle.} In this step, we aim to bound $\norm{D^2u}_{L_\infty(\Omega)}$ and remove all the $u$ terms from the right-hand side of \eqref{est-apriori-moc-D2u}. Noting \eqref{eqn-def-R0}, we can choose the parameters
$$\theta(\Omega)=2\arcsin(\delta/2),\quad h(\Omega)=\sqrt{3}R_0/2$$
 for the interior cone at each point.
Using Lemma \ref{lem-interpolation} and \eqref{est-apriori-moc-D2u}, we obtain, for $0<\tau < \sqrt{3}R_0\frac{\arcsin(\delta/2)}{2\arcsin(\delta/2)+\pi}$,
\begin{equation*}
\begin{split}
\norm{D^2u}_{L_\infty(\Omega)}&\leq C(d,\delta)(\rho_{D^2u}(\tau)+\tau^{-2}\norm{u}_{L_\infty(\Omega)})\\
&\leq C\big((\tau^{\alpha_0} + I\widetilde{\omega}^{(3)}(\tau))\norm{D^2u}_{L_\infty(\Omega)} + (\tau^{-2} + I\widetilde{\omega}^{(3)}(\tau)) \norm{u}_{L_\infty(\Omega)} + I\widetilde{\omega}^{(4)}(\tau)\big).
\end{split}
\end{equation*}
Now, choose $\tau>0$ sufficiently small to absorb the first term on the right-hand side. Then, noting the sign condition \eqref{cond-sign}, we can use the ABP estimate in \cite[Theorem~6.1]{Lie-book} to bound $\norm{u}_{L_\infty(\Omega)}$. This leads to
\begin{equation}\label{est-nonlinear-D2u-sign}
\norm{D^2u}_{L_\infty(\Omega)}\lesssim \norm{F(0,0,0,\cdot)}_{L_d(\Omega)} + \norm{g}_{L_\infty(\Omega)} + I\widetilde{\omega}^{(4)}(1).
\end{equation}
Substituting back into \eqref{est-apriori-moc-D2u} and using the ABP estimate again, we conclude
\begin{equation}\label{est-nonlinear-moc-D2u-sign}
\rho_{D^2u}(r)\lesssim (\norm{F(0,0,0,\cdot)}_{L_d(\Omega)} + \norm{g}_{L_\infty}(\Omega) + I\widetilde{\omega}^{(4)}(1))(r^{\alpha_0} + I\widetilde{\omega}^{(3)}(r)) + I\widetilde{\omega}^{(4)}(r).
\end{equation}

{\em Step 3: Approximation.} We fix some constant $\alpha\in(0,\overline{\alpha})$, and take the mollification
\begin{equation*}
\Omega^{(n)}\nearrow\Omega, \quad \Omega^{(n)},\beta^{(n)}_0, \vec{\beta}^{(n)}, g^{(n)}\in C^{1,\alpha},\quad [F^{(n)}(M,p,u,\cdot)]_\alpha \leq K_n(|M|+|p|+|u|+1),
\end{equation*}
with corresponding moduli of continuity (in the sense of $L_\infty$ or Assumption \ref{ass-nonlinear}), which are uniform with respect to $n$. According to \cite[Theorem~3.3]{Sa-notes}, for each $n$, there exists a unique solution $u_n\in C^{2,\alpha}(\Omega^{(n)})$ to
\begin{equation*}
F^{(n)}(D^2u,Du,u,y)=0\,\,\text{in}\,\Omega^{(n)},\quad B^{(n)}u=g^{(n)}\,\,\text{on}\,\p\Omega^{(n)}.
\end{equation*}
Notice that \eqref{est-nonlinear-D2u-sign} and \eqref{est-nonlinear-moc-D2u-sign} give us the $C^2(\overline{\Omega^{(k)}})$ pre-compactness of the family $\{u^{(n)}\}_{n\geq k}$. Similar compactness argument as in the linear case gives us the unique $C^2(\overline{\Omega})$ solution to the original problem.
\end{proof}


\end{document}